\title[]{Nonlinear harmonic bundles}
\author[Mao Sheng]{Mao Sheng}
\email{msheng@tsinghua.edu.cn}
\address{Yau Mathematical Science Center, Tsinghua University, Beijing, 100084, China}
\address{Yanqi Lake Beijing Institute of Mathematical Sciences and Applications, Beijing, 101408, China}
\begin{document}
\theoremstyle{plain}
\newtheorem{thm}{Theorem}[section]
\newtheorem{theorem}[thm]{Theorem}
\newtheorem{lemma}[thm]{Lemma}
\newtheorem{corollary}[thm]{Corollary}
\newtheorem{proposition}[thm]{Proposition}
\newtheorem{addendum}[thm]{Addendum}
\newtheorem{variant}[thm]{Variant}
\theoremstyle{definition}
\newtheorem{lemma and definition}[thm]{Lemma and Definition}
\newtheorem{construction}[thm]{Construction}
\newtheorem{statement}[thm]{Statement}
\newtheorem{notations}[thm]{Notations}
\newtheorem{question}[thm]{Question}
\newtheorem{problem}[thm]{Problem}
\newtheorem{remark}[thm]{Remark}
\newtheorem{remarks}[thm]{Remarks}
\newtheorem{definition}[thm]{Definition}
\newtheorem{claim}[thm]{Claim}
\newtheorem{assumption}[thm]{Assumption}
\newtheorem{assumptions}[thm]{Assumptions}
\newtheorem{properties}[thm]{Properties}
\newtheorem{example}[thm]{Example}
\newtheorem{conjecture}[thm]{Conjecture}
\newtheorem{proposition and definition}[thm]{Proposition and Definition}
\numberwithin{equation}{thm}
\newcommand{\Spec}{\mathrm{Spec}}
\newcommand{\pP}{{\mathfrak p}}
\newcommand{\sA}{{\mathcal A}}
\newcommand{\sB}{{\mathcal B}}
\newcommand{\sC}{{\mathcal C}}
\newcommand{\sD}{{\mathcal D}}
\newcommand{\sE}{{\mathcal E}}
\newcommand{\sF}{{\mathcal F}}
\newcommand{\sG}{{\mathcal G}}
\newcommand{\sH}{{\mathcal H}}
\newcommand{\sI}{{\mathcal I}}
\newcommand{\sJ}{{\mathcal J}}
\newcommand{\sK}{{\mathcal K}}
\newcommand{\sL}{{\mathcal L}}
\newcommand{\sM}{{\mathcal M}}
\newcommand{\sN}{{\mathcal N}}
\newcommand{\sO}{{\mathcal O}}
\newcommand{\sP}{{\mathcal P}}
\newcommand{\sQ}{{\mathcal Q}}
\newcommand{\sR}{{\mathcal R}}
\newcommand{\sS}{{\mathcal S}}
\newcommand{\sT}{{\mathcal T}}
\newcommand{\sU}{{\mathcal U}}
\newcommand{\sV}{{\mathcal V}}
\newcommand{\sW}{{\mathcal W}}
\newcommand{\sX}{{\mathcal X}}
\newcommand{\sY}{{\mathcal Y}}
\newcommand{\sZ}{{\mathcal Z}}
\newcommand{\A}{{\mathbb A}}
\newcommand{\B}{{\mathbb B}}
\newcommand{\C}{{\mathbb C}}
\newcommand{\D}{{\mathbb D}}
\newcommand{\E}{{\mathbb E}}
\newcommand{\F}{{\mathbb F}}
\newcommand{\G}{{\mathbb G}}
\renewcommand{\H}{{\mathbb H}}
\newcommand{\I}{{\mathbb I}}
\newcommand{\J}{{\mathbb J}}
\renewcommand{\L}{{\mathbb L}}
\newcommand{\M}{{\mathbb M}}
\newcommand{\N}{{\mathbb N}}
\renewcommand{\P}{{\mathbb P}}
\newcommand{\Q}{{\mathbb Q}}
\newcommand{\Qbar}{\overline{\Q}}
\newcommand{\R}{{\mathbb R}}
\newcommand{\SSS}{{\mathbb S}}
\newcommand{\T}{{\mathbb T}}
\newcommand{\U}{{\mathbb U}}
\newcommand{\V}{{\mathbb V}}
\newcommand{\W}{{\mathbb W}}
\newcommand{\Z}{{\mathbb Z}}
\newcommand{\g}{{\gamma}}
\newcommand{\id}{{\rm id}}
\newcommand{\rk}{{\rm rank}}
\newcommand{\END}{{\mathbb E}{\rm nd}}
\newcommand{\End}{{\rm End}}
\newcommand{\Hom}{{\rm Hom}}
\newcommand{\Hg}{{\rm Hg}}
\newcommand{\tr}{{\rm tr}}
\newcommand{\Sl}{{\rm Sl}}
\newcommand{\GL}{{\rm Gl}}
\newcommand{\Cor}{{\rm Cor}}

\newcommand{\SO}{{\rm SO}}
\newcommand{\OO}{{\rm O}}
\newcommand{\SP}{{\rm SP}}
\newcommand{\Sp}{{\rm Sp}}
\newcommand{\UU}{{\rm U}}
\newcommand{\SU}{{\rm SU}}
\newcommand{\SL}{{\rm SL}}
\newcommand{\ra}{\rightarrow}
\newcommand{\la}{\leftarrow}
\newcommand{\Gal}{\mathrm{Gal}}
\newcommand{\Res}{\mathrm{Res}}
\newcommand{\Gl}{\mathrm{Gl}}
\newcommand{\Gr}{\mathrm{Gr}}
\newcommand{\Exp}{\mathrm{Exp}}
\newcommand{\Sym}{\mathrm{Sym}}
\newcommand{\Ann}{\mathrm{Ann}}
\newcommand{\GSp}{\mathrm{GSp}}
\newcommand{\Tr}{\mathrm{Tr}}
\newcommand{\HIG}{\mathrm{HIG}}
\newcommand{\MIC}{\mathrm{MIC}}
\newcommand{\FV}{\mathrm{FV}}
\newcommand{\HV}{\mathrm{HV}}
\newcommand{\Ext}{\mathrm{Ext}}
\newcommand{\bA}{\mathbf{A}}
\newcommand{\bK}{\mathbf{K}}
\newcommand{\bM}{\mathbf{M}} 
\newcommand{\bP}{\mathbf{P}}
\newcommand{\bC}{\mathbf{C}}
\newcommand{\NMF}{\mathrm{NMF}}
\newcommand{\sFV}{\mathrm{SFV}}
\newcommand{\sHV}{\mathrm{SHV}}
\newcommand{\Aut}{{\rm Aut}}
\newcommand{\Gm}{{\rm Gm}}

\maketitle
\begin{abstract}
We generalize the notion of harmonic bundles in nonabelian Hodge theory to the nonlinear setting.
\end{abstract}
\section{Introduction}
In nonabelian Hodge theory, flat bundles and Higgs bundles are interconnected via harmonic metrics, which is the core idea of the notion of harmonic bundles. Can we still have a meaningful theory if one replaces a typical fiber, which is a finite dimensional complex vector space, by a complex manifold? This question is not totally absurd, because we have recently obtained a Hitchin-Simpson type correspondence in positive characteristic \cite{S}, that generalizes the nonabelian Hodge correspondence for vector bundles in positive characteristic, as established by Ogus-Vologodsky \cite{OV}. Now we are looking for a complex analogue of \cite{S}. The present note presents our preliminary idea about a possible nonlinear Hodge theory over the field of complex numbers, and the main purpose here is to introduce the notion of a \emph{nonlinear harmonic bundle}, generalizing the notion of a harmonic bundle in nonabelian Hodge theory.\\

We are thinking of differentiable fiber bundles, equipped with transversal foliations, holomorphic with respect to some complex structures on the fiber bundles, as well as Higgs fields (defined below), holomorphic with respect to possibly another complex structures on the same fiber bundles, which are reconstructable from each other by certain mechanism involving hermitian metrics on the fiber bundles. The precise meaning of it is the main content of the note.  Existence of harmonic metrics are related to slope stability of holomorphic vector bundles. Likewise, existence of nonlinear harmonic metrics should be related to certain nonlinear slope stability on holomorphic fibrations.  \\

Conjecturally, relative nonabelian Hodge moduli spaces provide examples of nonlinear harmonic bundles. We show it is indeed the case for rank one moduli. As one merit of our considerations, the nonlinear Higgs fields on the rank one Higgs moduli spaces determine the integral structure of weight one variations of Hodge structure with Zariski dense monodromy, which is inaccessible to Higgs fields in nonabelian Hodge theory. \\

{\bf Acknowledgement.} This note benefits a lot from numerous discussions with Nianzi Li and Zhaofeng Yu. Nianzi Li corrected the expression of $\omega_U$ in Example 3.3 by pointing out my original formula does not glue globally. He also pointed out that my early conditions on $\beta$ will only lead to the identity map. The inclusion of a reasonable $\beta$ into the Simpson mechanism really came from a discussion with Zhaofeng Yu, when he asked me whether we would recover the nonabelian Hodge correspondence from the Simpson mechanism as I originally proposed. This work is partially supported by the Chinese Academy of Sciences Project for Young Scientists in Basic Research (Grant No. YSBR-032). 
 
\section{Almost connections and almost Higgs fields}
In this note, the notation $S$ is reserved for a complex manifold $S$ as the base manifold. Let $\alpha: Z\to S$ be a differentiable fiber bundle over the complex manifold $S$. Properness of $\alpha$ is not assumed. Associated to $\alpha$ is the following short exact sequence of complex vector bundles over $S$
\begin{eqnarray}
0\to T_{Z_{\C}/S}\to T_{Z_{\C}}\to \alpha^*T_{S_{\C}}\to 0,    
\end{eqnarray}
where $T_{Z_{\C}/S}$ is the complexification of the real relative tangent bundle $T_{Z_{\R}/S}$ and etc. An integrable complex structure on $T_{Z_{\R}/S}$ is a choice of complex subbundle $T_{rel}\subset T_{Z_{\C}/S}$
satisfying $T_{Z_{\C}/S}=T_{rel}\oplus \bar T_{rel}$ and $[T_{rel},T_{rel}]\subset T_{rel}$. 
\begin{definition}\label{complex fiber bundle}
A complex fiber bundle over $S$ is a pair $(\alpha,T_{rel})$, where $\alpha: Z\to S$ is a differentiable fiber bundle over $S$ and $T_{rel}$ is an integrable complex structure on $T_{Z_{\R}/S}$.      
\end{definition}
The real relative tangent bundle restricts to each fiber of $\alpha$ the real tangent bundle of the fiber. Hence, a choice of $T_{rel}$ as above yields a complex structure on the fiber. So we may regard a complex fiber bundle as a differentiable family of complex manifolds. It generalizes naturally the notion of a complex vector bundle. It is well known that a holomorhic structure on a complex vector bundle over $S$ is equivalent to an integrable $\bar \partial$-operator on the bundle. Motivated by this fact, we define a $\bar \partial$-operator on the complex fiber bundle $(\alpha,T_{rel})$ to be a differentiable bundle morphism 
$$
\bar \partial: \alpha^*\bar T_S\to \frac{T_{Z_{\C}}}{\bar T_{rel}}
$$
whose image under the projection $\frac{T_{Z_{\C}}}{\bar T_{rel}}\to \alpha^*T_S\oplus \alpha^*\bar T_S$ is contained in $\alpha^*\bar T_S$ and such that the composite is the identity. Note that one has a short exact sequence of complex vector bundles attached to $(\alpha,T_{rel})$:
\begin{eqnarray}
0\to T_{rel}\to \frac{T_{Z_{\C}}}{\bar T_{rel}}\to \alpha^*T_S\oplus \alpha^*\bar T_S\to 0.    
\end{eqnarray}
A $\bar \partial$-operator is nothing but a splitting of the natural projection $\frac{T_{Z_{\C}}}{\bar T_{rel}}\to \alpha^*\bar T_S$. 
\begin{definition}\label{almost complex structure}
An almost complex structure on the complex fiber bundle $(\alpha,T_{rel})$ is a complex subbundle $T\subset T_{Z_{\C}}$ which contains $T_{rel}$ and satisfies 
\begin{itemize}
    \item [(1)] $T\oplus \bar T= T_{Z_{\C}}$;
    \item [(2)] The image of $T$ under the composite $T\subset T_{Z_{\C}} \to \alpha^*T_{S_{\C}}$ is contained in $\alpha^*T_S$.
\end{itemize}
If $T$ satisfies further the condition
\begin{itemize}
\item [(3)] $[T,T]\subset T$,
\end{itemize}
then we call it a complex structure. 
\end{definition}
By the theorem of Newlander-Nirenberg, a complex structure on $(\alpha,T_{rel})$ gives rise to a holomorphic fibration structure on $\alpha$ whose holomorphic relative tangent bundle equals $T_{rel}$, and vice versa. 
\begin{proposition}\label{almost cmoplex structures in terms of d bar operators}
Let $(\alpha, T_{rel})$ be a complex fiber bundle over $S$. Then a complex structure on $(\alpha, T_{rel})$ gives rise to a canonical $\bar \partial$-operator on $(\alpha, T_{rel})$. Conversely, a $\bar \partial$-operator on $(\alpha, T_{rel})$ gives a canonical almost complex structure on $(\alpha, T_{rel})$. It is a complex structure if and only if the inverse image of $\bar \partial(\alpha^*\bar T_S)\subset \frac{T_{Z_{\C}}}{\bar T_{rel}}$ in $T_{Z_{\C}}$ is closed under Lie bracket (we call $\bar \partial$ integrable if it occurs). 
\end{proposition}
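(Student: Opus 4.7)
The strategy is to establish a canonical bijection between almost complex structures $T$ on $(\alpha, T_{rel})$ and $\bar\partial$-operators, and to show that it restricts to a bijection between complex structures and integrable $\bar\partial$-operators. From an almost complex structure $T$, the inclusion $T_{rel}\subset T$ yields $\bar T_{rel}\subset \bar T$, and combined with $T\oplus \bar T=T_{Z_{\C}}$ this gives $T\cap \bar T_{rel}=0$, so the quotient map restricts to an injection $\bar T/\bar T_{rel}\hookrightarrow T_{Z_{\C}}/\bar T_{rel}$. Conjugating condition (2), the image of $\bar T$ in $\alpha^*T_{S_{\C}}$ lies in $\alpha^*\bar T_S$, with kernel $\bar T\cap T_{Z_{\C}/S}=\bar T_{rel}$; a rank count then promotes this to an isomorphism $\bar T/\bar T_{rel}\cong \alpha^*\bar T_S$. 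Its inverse, followed by the inclusion $\bar T/\bar T_{rel}\subset T_{Z_{\C}}/\bar T_{rel}$, is the required $\bar\partial$-operator, and the two defining properties (landing in $\alpha^*\bar T_S$ and splitting the projection) are immediate.

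Conversely, given a $\bar\partial$-operator, let $W\subset T_{Z_{\C}}$ be the preimage of $\bar\partial(\alpha^*\bar T_S)$ and set $T:=\bar W$. The splitting property of $\bar\partial$ yields a short exact sequence $0\to \bar T_{rel}\to W\to \alpha^*\bar T_S\to 0$, which immediately gives $T_{rel}\subset T$ and condition (2) for $T$ after conjugating. For condition (1), the ranks add up to $\rk T_{Z_{\C}}$, and any $v\in T\cap \bar T$ projects trivially to $\alpha^*T_{S_{\C}}$, so lies in $T_{Z_{\C}/S}$; the injectivity of $\bar\partial$ forces $W\cap T_{Z_{\C}/S}=\bar T_{rel}$, and by conjugation $\bar W\cap T_{Z_{\C}/S}=T_{rel}$, whence $v\in T_{rel}\cap \bar T_{rel}=0$. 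Unwinding the identifications shows the two constructions are mutually inverse.

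For the integrability statement, complex conjugation on $T_{Z_{\C}}$ gives $[T,T]\subset T$ if and only if $[\bar T,\bar T]\subset \bar T$, and $\bar T$ is by construction precisely the preimage of $\bar\partial(\alpha^*\bar T_S)$ in $T_{Z_{\C}}$; this is the asserted criterion. The only point demanding real care is the bookkeeping of the interaction between complex conjugation, which exchanges $T_{rel}$ and $\bar T_{rel}$, and the quotient $T_{Z_{\C}}/\bar T_{rel}$, which is not conjugation-invariant, so one must occasionally pass to the conjugate quotient $T_{Z_{\C}}/T_{rel}$ when transporting statements between $T$ and $\bar T$.
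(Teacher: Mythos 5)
Your proposal is correct, and the converse direction (taking $\bar T$ to be the preimage of $\mathrm{im}(\bar\partial)$, counting ranks, and showing $T\cap\bar T=0$ by projecting to $\alpha^*T_{S_{\C}}$) together with the integrability criterion via conjugation is essentially identical to the paper's argument. Where you genuinely diverge is the forward direction. The paper invokes Newlander--Nirenberg to get a holomorphic fibration $f:X\to S$, chooses fibered holomorphic coordinates $\{s_i,t_j\}$, defines $\bar\partial$ by $\bar\partial_{s_i}\mapsto\bar\partial_{s_i}\ \mathrm{mod}\ \bar T_{X/S}$, and then verifies invariance under both vertical and base changes of coordinates. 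You instead give a coordinate-free linear-algebra construction: the composite $\bar T\to\alpha^*T_{S_{\C}}$ lands in $\alpha^*\bar T_S$ with kernel $\bar T_{rel}$, hence induces an isomorphism $\bar T/\bar T_{rel}\cong\alpha^*\bar T_S$ whose inverse is the desired splitting. This buys you three things: no well-definedness check, an honest bijection between almost complex structures and $\bar\partial$-operators (making it transparent that the two constructions are mutually inverse), and the observation that integrability of $T$ is not actually needed for this direction --- only conditions (1) and (2) of Definition \ref{almost complex structure}. What the paper's coordinate computation buys in exchange is the explicit local formula for $\bar\partial_f$, which is reused later (e.g.\ in the chern connection computations and in Lemma \ref{curvature of type (1,1)}). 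One small point you should make explicit: the identity $\bar T\cap T_{Z_{\C}/S}=\bar T_{rel}$ does not follow from the rank count alone; it needs the one-line argument that $v=a+\bar b$ with $a\in T_{rel}\subset T$ and $\bar b\in\bar T_{rel}\subset\bar T$ forces $a\in T\cap\bar T=0$ (or, equivalently, first deduce surjectivity of $\bar T\to\alpha^*\bar T_S$ from the surjectivity of $T\oplus\bar T\to\alpha^*T_S\oplus\alpha^*\bar T_S$ and the fact that each summand maps into the corresponding factor). With that sentence added, the argument is complete.
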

\begin{proof}
For clarity of notation, we shall write $f: X\to S$ for the holomorphic fibration over $S$ corresponding to a given complex structure $T$ on $(\alpha,T_{rel})$. Thus the underlying differentiable fiber structure of $f$ is just $\alpha$ and $T_{X/S}=T_{rel}$ as complex subbundle of $T_{Z_{\C}/S}$. Let $x\in X$ and $s=f(x)\in S$. Locally around $s$, one may take a set of local coordinates $\{s_1,\cdots,s_m\}$ of $S$ such that $\{s_1\circ \alpha, \cdots, s_m \circ \alpha\}$ extends to a set of local coordinates $\{s_1\circ \alpha, \cdots, s_m \circ \alpha,t_1,\cdots,t_r\}$ of $X$ around $x$. By abuse of notation, we shall write $s_i\circ \alpha$ simply by $s_i$, and thereby understand $f$ as the natural projection 
$$
(s_1,\cdots,s_m,t_1,\cdots,t_r)\mapsto (s_1,\cdots,s_m).
$$
Then $\{\partial_{s_1},\cdots,\partial_{s_m}\}$ form a holomorphic basis of $T_S$ on an open neighborhood $V$ of $s$, and $\{\partial_{s_1},\cdots,\partial_{s_m},\partial_{t_1},\cdots,\partial_{t_r}\}$ forms a holomorphic basis of $T_X$ on an open neighborhood $U\subset \alpha^{-1}(V)$ of $x$, in which $\{\partial_{t_1},\cdots,\partial_{t_r}\}$ forms a holomorphic basis of $T_{X/S}$ over $U$. Define $\bar \partial$ over $U$ by the association
$$
\bar \partial_{s_i}\mapsto \bar \partial_{s_i}\ \textrm{mod}\ \bar T_{X/S}.
$$
Suppose $x$ lies in another open subset $U'\subset f^{-1}V$ with a set of new local coordinates $\{s_1,\cdots, s_m, t'_1,\cdots,t'_r\}$, where for each $i$,
$$
t'_i=t'_i(s_1,\cdots,s_m,t_1,\cdots,t_r)
$$
is holomorphic in each variable. By the chain rule, $\bar \partial_{s_i}$ in the new coordinates is mapped to 
$$
\bar \partial_{s_i}+\sum_{1\leq j\leq r}\overline{(\frac{\partial t'_j}{\partial s_i})}\bar \partial_{t'_j},
$$
which is equal to $\bar \partial_{s_i}$ modulo $\bar T_{X/S}$. Suppose $s=f(x)\in V\subset S$ lies in another open subset $V'$ with a set of new local coordinates $\{s'_1,\cdots,s'_m\}$. Then the association 
$$
\bar \partial_{s'_i}\mapsto \bar \partial_{s'_i}\ \textrm{mod}\ \bar T_{X/S}
$$
coincide with the previous association, because $s'_i=s'_i(s_1,\cdots,s_m)$ is holomorphic in each variable. Hence, we obtain a well-defined $\bar \partial$-operator on $(\alpha,T_{rel})$. This completes the first part. Conversely, given a $\bar \partial$-operator on $(\alpha,T_{rel})$, we denote the inverse image of $\textrm{im}(\bar \partial)$ under the natural projection $T_{Z_{\C}}\to \frac{T_{Z_{\C}}}{\bar T_{rel}}$ by $\bar T$. Then the rank of $\bar T$ is equal to 
$$\rk \ \textrm{im}(\bar \partial)+\rk \ \bar T_{rel}=\rk\ \bar T_S+\rk\ \bar T_{rel}.$$ 
Moreover, since the image of $T$, the complex conjugation of $\bar T$, under the natural projection $T_{Z_{\C}}\to \alpha^*T_{S_{\C}}$ is contained in $\alpha^*T_S$, an element in $T\cap \bar T$ has to be contained in $T_{rel}\cap \bar T_{rel}$, which is the zero. Thus, for reason of rank, we have a decomposition $T_{Z_{\C}}=T\oplus \bar T$. As $T$ contains $T_{rel}$ by definition, it defines an almost complex structure on $(\alpha,T_{rel})$. By definition, $\bar\partial$ is integrable if and only if $[\bar T,\bar T]\subset \bar T$, equivalently $[T,T]\subset T$ by taking the complex conjugation. This completes the proof.  
\end{proof}
When $(\alpha, T_{rel})$ is a complex vector bundle, a $\bar \partial$-operator on it in the above sense is much more general than a $\bar \partial$-operator in the classical sense. For example, while the integrability of a classical $\bar \partial$-operator can be measured through the vanishing of a curvature tensor valued in $\wedge^2\bar \Omega_S\otimes \alpha_*T_{rel}$, it is not the case for an arbitrary $\bar \partial$-operator. We shall now discuss a condition for $\bar \partial$ so that the integrability of the corresponding almost complex structure is measured through the vanishing of a curvature tensor, generalizing the complex vector bundle situation. 
\begin{lemma}\label{integrability of almost complex structure}
Let $(\alpha, T_{rel})$ be a complex fiber bundle on $S$. Consider the set of $\bar \partial$-operators on $(\alpha, T_{rel})$ satisfying the following local lifting condition: For any point $x\in Z$, there is a local lifting $\tilde {\bar \partial}: \alpha^*\bar T_S\to T_{Z_{\C}}$ of $\bar \partial$ around $x$ so that 
$$
[\bar T_{rel},\textrm{im}(\tilde{\bar \partial})]\subset \bar T_{rel}.
$$
Then there is a differentiable bundle map $F^{0,2}: \wedge^2 \bar T_S\to \alpha_*\frac{T_{Z_{\C}}}{\bar T}$ such that it is zero if and only if its corresponding almost complex structure $\bar T$ on $(\alpha, T_{rel})$ is integrable. 
\end{lemma}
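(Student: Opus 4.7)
The plan is to build $F^{0,2}$ by a local curvature-type formula using any lift $\tilde{\bar\partial}$ provided by the hypothesis, and then prove the formula is independent of the lift so that the local definitions patch into a global bundle map on $S$. Recall from the proof of Proposition \ref{almost cmoplex structures in terms of d bar operators} that locally $\bar T=\bar T_{rel}+\textrm{im}(\tilde{\bar\partial})$; in particular $\textrm{im}(\tilde{\bar\partial})\subset \bar T$. For sections $\bar X,\bar Y$ of $\alpha^*\bar T_S$ near $x$, I would set
$$
F^{0,2}(\bar X,\bar Y):=[\tilde{\bar\partial}\bar X,\tilde{\bar\partial}\bar Y]\ \textrm{mod}\ \bar T.
$$

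First I would check bilinearity over $C^\infty$-functions on $Z$, which is what upgrades $F^{0,2}$ from a differential operator to a bundle map. For a smooth function $f$ on $Z$,
$$
[\tilde{\bar\partial}(f\bar X),\tilde{\bar\partial}\bar Y]=f[\tilde{\bar\partial}\bar X,\tilde{\bar\partial}\bar Y]-(\tilde{\bar\partial}\bar Y)(f)\cdot\tilde{\bar\partial}\bar X,
$$
and the error term lies in $\textrm{im}(\tilde{\bar\partial})\subset \bar T$, so it vanishes modulo $\bar T$; the second slot is symmetric, and antisymmetry of $F^{0,2}$ is inherited from the Lie bracket. Hence $F^{0,2}$ defines, locally on $Z$, a section of $\wedge^2(\alpha^*\bar T_S)^{\vee}\otimes (T_{Z_{\C}}/\bar T)$, which by adjunction corresponds to a map $\wedge^2\bar T_S\to\alpha_*(T_{Z_{\C}}/\bar T)$ on $S$.

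Next I would verify independence of the local lift. If $\tilde{\bar\partial}'$ is another lift satisfying the hypothesis, then $\xi:=\tilde{\bar\partial}'\bar X-\tilde{\bar\partial}\bar X$ and $\eta:=\tilde{\bar\partial}'\bar Y-\tilde{\bar\partial}\bar Y$ are sections of $\bar T_{rel}$, since both lifts agree with $\bar\partial$ modulo $\bar T_{rel}$. Expanding yields
$$
[\tilde{\bar\partial}'\bar X,\tilde{\bar\partial}'\bar Y]-[\tilde{\bar\partial}\bar X,\tilde{\bar\partial}\bar Y]=[\xi,\tilde{\bar\partial}\bar Y]+[\tilde{\bar\partial}\bar X,\eta]+[\xi,\eta].
$$
The first two terms lie in $[\bar T_{rel},\textrm{im}(\tilde{\bar\partial})]\subset \bar T_{rel}$ by the local lifting condition, and the third lies in $[\bar T_{rel},\bar T_{rel}]\subset \bar T_{rel}$ by integrability of $T_{rel}$. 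All three sit in $\bar T_{rel}\subset \bar T$, so the ambiguity dies in $T_{Z_{\C}}/\bar T$, and the locally defined $F^{0,2}$ patches into a global bundle map.

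Finally, for the equivalence with integrability, I would use the decomposition $\bar T=\bar T_{rel}+\textrm{im}(\tilde{\bar\partial})$ to split $[\bar T,\bar T]\subset \bar T$ into the three brackets $[\bar T_{rel},\bar T_{rel}]$, $[\bar T_{rel},\textrm{im}(\tilde{\bar\partial})]$ and $[\textrm{im}(\tilde{\bar\partial}),\textrm{im}(\tilde{\bar\partial})]$; the first is automatic from integrability of $T_{rel}$, the second is automatic from the lifting hypothesis, and the third is precisely the vanishing of $F^{0,2}$. By Proposition \ref{almost cmoplex structures in terms of d bar operators}, $[\bar T,\bar T]\subset \bar T$ is exactly the integrability of the almost complex structure $\bar T$, finishing the proof. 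The main obstacle I anticipate is the bookkeeping around the Leibniz-rule error term and the change-of-lift computation: both succeed only because the hypothesis forces $[\bar T_{rel},\textrm{im}(\tilde{\bar\partial})]\subset \bar T_{rel}$, and without this condition the candidate $F^{0,2}$ would fail to be either tensorial or well-defined.
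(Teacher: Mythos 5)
Your proof is correct and follows essentially the same route as the paper: define $F^{0,2}$ as the bracket of local lifts reduced modulo $\bar T$, use the local lifting condition (together with $[\bar T_{rel},\bar T_{rel}]\subset \bar T_{rel}$) to get well-definedness, and reduce $[\bar T,\bar T]\subset\bar T$ to the vanishing of the $\textrm{im}(\tilde{\bar\partial})$-bracket term via the local decomposition $\bar T=\bar T_{rel}\oplus \textrm{im}(\tilde{\bar\partial})$. You merely spell out the tensoriality and lift-independence checks that the paper asserts without detail, which is a welcome addition rather than a deviation.
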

Since $\bar T_{rel}$ is closed under Lie bracket, the above condition is independent of the choice of such a local lifting. 
\begin{proof}
By definition of $\bar T$, there is a short exact sequence of complex vector bundles:
$$
0\to \bar T_{rel}\to \bar T\to \alpha^*\bar T_S\to 0.
$$
A local lifting of $\bar \partial$ is in fact nothing but a local splitting of the projection $\bar T\to \alpha^*\bar T_S$. Write locally $\bar T=\bar T_{rel}\oplus \textrm{im}(\tilde{\bar \partial})$. Then we see that, by the condition satisfied by $\tilde{\bar \partial}$ in the lemma, $[\bar T,\bar T]\subset \bar T$ holds if and only if $[\textrm{im}(\tilde{\bar \partial}),\textrm{im}(\tilde{\bar \partial})]\subset \bar T$ holds, equivalently, the composite 
$$
\alpha^*\wedge^2\bar T_S\stackrel{[\tilde{\bar \partial},\tilde{\bar \partial}]}{\longrightarrow} T_{Z_{\C}}\to \frac{T_{Z_{\C}}}{\bar T}, \quad a\wedge b \mapsto \overline{[\tilde{\bar \partial}(a),\tilde{\bar \partial}(b)]},
$$
is zero. As remarked above, although $\tilde{\bar \partial}$ is only locally defined, the previous composite map is actually globally defined. We shall define $F^{0,2}$ to be its adjoint map. That concludes the proof.
\end{proof}

The $\bar \partial$-operator on $(\alpha,T_{rel})$ associated to a holomorphic fibration $f: X\to S$ will be denoted by $\bar \partial_f$ below. Associated to $f$ is the following short exact sequence of holomorphic vector bundles over $X$:
\begin{eqnarray}
0\to T_{X/S}\to T_X\stackrel{\pi}{\to}f^*T_S\to 0.
\end{eqnarray}
A holomorphic transversal foliation on $f$ is a holomorphic subbundle $\sF\subset T_X$ which is closed under the Lie bracket and projects isomorphically onto $f^*T_S$. Equivalently, it is a holomorphic splitting of the above sequence, which is given by an $\sO_X$-linear morphism $\nabla: f^*T_S\to T_X$ whose composite with $\pi$ is the identity, such that its image is closed under the Lie bracket. Another useful point of view is that a holomorphic transversal foliation on $f$ gives rise to a first order differential operator on the sheaf of local holomorphic cross sections of $f$, and vice versa. When $f$ is a principal $G$-bundle where $G$ is a connected complex Lie group, a $G$-equivariant holomorphic splitting $\nabla$ is nothing but a holomorphic $G$-connection on $f$ (\cite{At}). 
\begin{definition}\label{holomorphic connection}
Let $f: X\to S$ be a holomorphic fibration over $S$. A holomorphic connection on $f$ is a holomorphic bundle morphism $\nabla: f^*T_S\to T_X$ which splits the natural projection $T_X\to f^*T_S$. It is said to be integrable or flat, if the image $\nabla(f^*T_S)\subset T_X$ is closed under Lie bracket, equivalently, its curvature
$$
F^{2,0}: \wedge^2T_S \longrightarrow f_*T_{X/S}, \quad a\wedge b\mapsto \overline{[\nabla(a),\nabla(b)]}
$$
vanishes. 
\end{definition}
A holomorphic connection does not necessarily exist, evenly locally in $S$. When $f$ is proper and $\dim S=1$, the existence of a holomorphic connection on $f$ forces $f$ to be isotrivial, viz, locally a product. However, there are plenty of interesting examples of flat holomorphic connections which are \emph{nonlinear} in nature. 
\begin{example}\label{representations of fundamental groups}
For a pair of positive integers $(m,n)$, one considers representations $\rho: \pi_1(\C-\{1,\cdots,m\},0)\to \Aut(\C^n)$, where $\Aut(\C^n)$ is the group of holomorphic automorphisms of $\C^n$. Let $S=\C-\{1,\cdots,m\}$ and $\tilde S$ be the universal cover of $S$. Over $\tilde S$, we have the trivial family $\tilde f: \tilde X:=\tilde S\times \C^n\to \tilde S$, equipped with the trivial transveral foliation on $\tilde f$. Let $\pi_1(S)$ act on $\tilde S$ by the deck transformation (right action) and on $\C^n$ via the representation $\rho$ (left action). Two pairs $(s_i,t_i)\in \tilde S\times \C^n, i=1,2$ is in the same orbit if there is an element $\gamma\in \pi_1(S)$ such that 
$$
(s_1,t_2)=(s_1\gamma^{-1},\gamma t_1). 
$$
Passing to the quotient by $\pi_1(S)$, the family $\tilde f$ as well as the transversal foliation induce a holomorphic fiber bundle $f_{\rho}: X_{\rho}=\tilde X/\pi_1(S)\to \tilde S/\pi_1(S)=S$, and a holomorphic flat connection $\nabla_{\rho}: f_{\rho}^*T_S\to T_{X_\rho}$. Unless $n=1$, a general element in $\Aut(\C^n)$ is not linear: Indeed, the subgroup $\Aut_{alg}(\C^n)$ consisting of algebraic automorphisms of $\C^n$ is not an algebraic group for $n\geq 2$, and is not even dense in the whole group (e.g $(t_1,\cdots,t_n)\mapsto (t_1,\cdots, e^{t_1}t_n)$ cannot be approximated by elements in $\Aut_{alg}(\C^n)$). Unlike in the linear case, the monodromy group of $\rho$ may or may not be contained in a complex Lie subgroup of $\Aut(\C^n)$, which indicates the complexity of the associated connection $\nabla_{\rho}$. Take $m=2$ and $n=2$ as example. After fixing a set of generators of $\pi_1(\C-\{1,2\},0)$, a representation $\rho$ is given by two elements of $\Aut(\C^2)$. Let $\rho_1$ be the one given by
$$
(t_1,t_2)\mapsto (t_1,t_1+t_2),\quad (t_1,t_2)\mapsto (t_1,t_1^2+t_2),
$$
and $\rho_2$ given by
$$
(t_1,t_2)\mapsto (t_2,t_1),\quad (t_1,t_2)\mapsto (t_1,t_1^2+t_2).
$$
It is clear that $\textrm{im}\rho_1$ is contained in the two dimensional complex Lie subgroup 
$$
\{(a_1,a_2)\in \C^2| (t_1,t_2)\mapsto (t_1,a_1t_1+a_2t_1^2+t_2)\}.
$$
Suppose it is also the case for $\rho_2$. As $\textrm{im}\rho_2\subset \Aut_{alg}(\C^2)$, it is contained a complex Lie subgroup $G$ of $\Aut_{alg}(\C^2)$, whose Lie algebra is 
$$
H^0(\A^2_{\C},T_{\A^2_{\C}/\C})= \C[t_1,t_2]\otimes_{\C}\C\{\partial_{t_1},\partial_{t_2}\}.
$$
Since the Lie algebra $\mathfrak{g}$ of $G$ is of finite dimension, the coefficients of its elements in $\C[t_1,t_2]\otimes_{\C}\C\{\partial_{t_1},\partial_{t_2}\}$ have bounded degree. A simple calculation on the Jacobian matrices of elements in $\textrm{im}\rho_2$ shows that it is not the case. Contradiction and therefore $\textrm{im}\rho_2$ is not contained in any complex Lie subgroup of $\Aut(\C^2)$. It is interesting to know how some of these examples can be related to the next set of examples in one-one correspondence manner.      
\end{example}
\begin{example}\label{rational ODE}
A rational ODE of order $n$ is of form 
$$
t^{(n)}=F(s,t,t^{(1)},\cdots,t^{(n-1)}),
$$
where $F$ is a polynomial in $(t^{(1)},\cdots,t^{(n-1)})$ with coefficients rational functions in variable $(s,t)$. The first order rational ODE was studied by Poincar\'e in his algebraically integrability problem on
rational foliations on $\C^2$ (that remains widely open). Set $t_i=t^{(i)}, 0\leq i\leq n-1$. One associates canonically a holomorphic flat connection on a holomorphic fibration $f: U\subset S\times \C^n \stackrel{pr}{\to}S$, where $S\subset \C$ (resp. $U\subset S\times \C^n$) is Zariski open, by the following formula:
$$
\partial_s\mapsto \partial_s+\sum_{0\leq i\leq n-2}t_{i+1}\partial_{t_i}+F(s,t_0,\cdots,t_{n-1})\partial_{t_{n-1}}.
$$
A (local) solution of the ODE $t=f(s)$ with the initial data at $s_0$ with
$$(f(s_0),f^{(1)}(s_0),\cdots,f^{(n-1)}(s_0))=(t_0,\cdots,t_{n-1})$$ gives rise to a (local) integral curve of the above tranversal foliation passing through the point $(s_0,t_0,\cdots,t_{n-1})\in S\times \C^n$, and vice versa. Unlike the linear case (when $F$ is linear in $(t,t^{(1)},\cdots,t^{(n)})$), the analytic continuation along a loop will not give rise to a monodromy representation of $\pi_1(S)$ in general. So it is an interesting problem to characterize those rational ODEs whose associated holomorphic flat connections come from Example \ref{representations of fundamental groups}. A famous example in this range is the Painlev\'e XI, a rational ODE of order two. 
\end{example}
It is necessary and actually important to consider a more general notion of connections than holomorphic connections.
\begin{definition}\label{almost connection}
An almost connection $\partial$ on a complex fiber bundle $(\alpha,T_{rel})$ is a differentiable splitting of the natural projection
$$
\frac{T_{Z_{\C}}}{\bar T_{rel}}\to \alpha^*T_S. 
$$ 
Let $(\alpha,T)$ be a complex fiber bundle equipped with an almost complex structure $T$ containing $T_{rel}$. An almost connection on $(\alpha,T)$ is an almost connection on $(\alpha,T_{rel})$ such that its image $\partial(\alpha^*T_S)$ is contained in $T\subset \frac{T_{Z_{\C}}}{\bar T_{rel}}$.
\end{definition}
We shall see natural examples of almost connections in the next section. Now we come back to the holomorphic fibration $f: X\to S$ as above. 
\begin{definition}\label{holomorphic Higgs field}
A holomorphic Higgs field on $f$ is an $\sO_S$-linear morphism $\theta: T_S\to f_*T_{X/S}$ satisfying the integrability condition $[\theta,\theta]=0$.   
\end{definition}
When $f$ is a holomoprhic vector bundle over $S$, the usual Higgs fields on the vector bundle, as first introduced by Hitchin in \cite{H}, are those linear holomorphic Higgs fields on $f$. For an arbitrary $f$, holomorphic Higgs fields on $f$ always exist, since one may simply take $\theta=0$.
\begin{example}
Let $S$ be a Riemann surface. Over $S$, there is a tautological linear Higgs bundle of rank two
$$
(f: X=\Omega_S\oplus \C_S \to S,\theta),
$$
where $\C_S=\C\times S$ is the constant bundle, and 
$$\theta: T_S\to f_*T_{X/S}\cong  \Sym(T_S\oplus \C_S)\otimes (\Omega_S\oplus \C_S)$$ is determined by 
$$
id: \Omega_S \to \C_S\otimes \Omega_S,\quad \C_S\to 0.
$$
For a positive integer $n$, put $[n]: \C_S\to \C_S$ to be algebraic morphism determined by 
$$
\C\to \C, \quad x\mapsto x^n,
$$
where $x$ is the coordinate of $\C$ as affine line. Consider the $S$-morphism 
$$\phi=\id\times [n]: X\to X,$$ and the pullback Higgs field $\phi^*\theta$ on $f$, which is the adjoint of the composite morphism
$$
f^*T_S=\phi^*f^*T_S\stackrel{\phi^*\theta}{\to} \phi^*T_{X/S}\stackrel{d\phi}{\to} T_{X/S}.
$$
Let $s$ be a local coordinate of $S$. Then $\phi^*\theta$ is given by 
$$
\partial_s \mapsto (nx^{n-1}\partial_s)e,
$$
where $\partial_s$ is understood as the linear local coordinate $\Omega_S$ along the fiber direction. When $n>1$, it is a nonlinear Higgs field on a vector bundle. 
\end{example}

\begin{example}\label{nonabelian Kodaira-Spencer}
Let $S$ be a smooth complex algebraic curve, and $h: X\to S$ be a polarized family of smooth projective curves of genus $\geq 2$. For $r\geq 1$, let $g: M_{Hig}=M^r_{Hig}(X/S)\to S$ be the coarse moduli space of rank $r$ stable Higgs bundles with vanishing Chern classes over $X/S$, which is a holomorphic fibration over $S$. Let $(\sE,\Theta)$ be the universal Higgs bundle over $N_{Hig}$, where $N_{Hig}$ appears in the following Cartesian diagram
 
\[
		\xymatrix{ N_{Hig}\ar[d]_{g'}\ar[r]^{h'} &M_{Hig}\ar[d]^{g} \\
			X\ar[r]^{h} & S.}
		\]
We write the universal Higgs field in the form $\Theta: T_{N_{Hig}/M_{Hig}}\to \sE nd(\sE)$. Let $\Theta^{end}: \sE nd(\sE)\to \sE nd(\sE)\otimes \Omega_{N_{Hig}/M_{Hig}}$ be the associated endormophism Higgs bundle to $(\sE,\Theta)$. It follows from the integrality of $\Theta$ that 
$$
\Theta^{end}\circ \Theta: T_{N_{Hig}/M_{Hig}}\to \sE nd(\sE)\otimes \Omega_{N_{Hig}/M_{Hig}}.
$$
is zero. Let $\Omega^*_{Hig}(\sE nd(\sE),\Theta^{end})$ be the Higgs complex of $(\sE nd(\sE),\Theta^{end})$ which is expressed as
$$
\sE nd(\sE)\stackrel{\Theta^{end}}{\longrightarrow} \sE nd(\sE)\otimes \Omega_{N_{Hig}/M_{Hig}} \stackrel{\Theta^{end}}{\longrightarrow}\cdots.
$$
Then we obtain a morphism of complexes $(T_{N_{Hig}/M_{Hig}},0)\to \Omega^*_{Hig}(\sE nd(\sE),\Theta^{end})$. As $T_{N_{Hig}/M_{Hig}}\cong g'^*T_{X/S}$, Grothendieck spectral sequence gives a natural morphism
$$
R^1h_*T_{X/S}\otimes g'_*\sO_{N_{Hig}}\cong R^1h_*g'_*g'^*T_{X/S}\to  g_*\R^1h'_*\Omega^*_{Hig}(\sE nd(\sE),\Theta^{end})\cong g_*T_{M_{Hig}/S}.
$$
So we have the natural morphism $\tau:  R^1h_*T_{X/S} \to g_*T_{M_{Hig}/S}$. Let $\rho_{KS}: T_{S}\to R^1h_*T_{X/S}$ be the Kodaira-Spencer morphism of $g$. Finally, we define a holomorphic Higgs field on $g$ by
$$
\theta_{KS}=\tau\circ \rho_{KS}: T_S\to g_*T_{M_{Hig}/S}.
$$
When $h$ is a family of hyperbolic curves, it can be shown that $\theta_{KS}=0$ if and only if $\rho_{KS}=0$, namely the family $h$ is isotrivial. 
\end{example}

\begin{definition}\label{almost higgs fields}
Notation as in Definition \ref{almost connection}. An almost Higgs field on $(\alpha,T_{rel})$ is a differentiable global section of the complex vector bundle $\alpha^*\Omega_S\otimes T_{rel}$.  
\end{definition}
Note that the space of almost connections on $(\alpha,T_{rel})$ is a torsor under the vector space formed by almost Higgs fields on $(\alpha,T_{rel})$. Also, by adjointness, a holomorphic Higgs field on a holomorphic fibration $f$ defines an $\sO_X$-linear morphism $f^*T_S\to T_{X/S}$, and hence a holomorphic global section of $f^*\Omega_S\otimes T_{X/S}$. Thus a holomorphic Higgs field is naturally an almost Higgs field.

\section{Chern connections and complex conjugations}
In this section, we shall generalize the notion of chern/metric connections and complex conjugations associated to hermitian metrics on complex vector bundles to complex fiber bundles.
\begin{definition}\label{hermitian metric}
Let $(\alpha,T_{rel})$ be a complex fiber bundle over $S$. A hermitian metric on it is a closed differentiable two-form $\omega$ on $Z$ whose restriction $\omega_s:=\omega|_{\alpha^{-1}(s)}$ to each fiber is of type $(1,1)$ and positive definite.   
\end{definition}
  
\begin{definition}
Let $\omega$ be a hermitian metric on a complex fiber bundle $(\alpha,T_{rel})$ as above. A metric connection on $(\alpha,T_{rel})$ is an almost connection $\partial: \alpha^*T_S\to \frac{T_{Z_{\C}}}{T_{rel}}$ such that its image $\partial(\alpha^*T_S)$ is orthogonal to $T_{rel}$ with respect to $\omega$. Given a $\bar \partial$-operator on $(\alpha,T_{rel})$ with the corresponding almost complex structure $T$ on $\alpha$, the chern connection $\partial_{\omega}$ is the \emph{unique} almost connection on $(\alpha,T)$ which is also a metric connection at the same time. 
\end{definition}
We would like to take a closer study of a chern connection when the $\bar \partial$-operator is integrable. So we are considering a holomorphic fibration
$f: X\to S$, and $\omega$ a hermitian metric on $f$. First, we look at $\partial_\omega$ locally. Let $x\in X$ and $s=f(x)\in S$. As in the proof of Proposition \ref{almost cmoplex structures in terms of d bar operators}, we take a set of local coordinates $\{s_1, \cdots, s_m,t_1,\cdots,t_r\}$ of $X$ around $x$, and a set of local coordinates $\{s_1,\cdots,s_m\}$ of $S$ around $s$ such that $f$ is given by the natural projection 
$$
(s_1,\cdots,s_m,t_1,\cdots,t_r)\mapsto (s_1,\cdots,s_m).
$$
By assumption, the matrix $A:=(\omega(\partial_{t_i},\partial_{t_j}))_{r\times r}$ is positive definite and hence non-degenerate in particular. Set $B:=(\omega(\partial_{s_i},\partial_{t_j}))_{m\times r}$. Then the association
$$
\partial_{s_i}\mapsto \partial_{s_i}+\sum_j c_{ij}\partial_{t_j},
$$
where $C=(c_{ij})_{m\times r}=B_{m\times r}A_{r\times r}^{-1}$, gives a local expression for $\partial_{\omega}$.  
\begin{example}\label{hermitian  metrics on vector bundles}
Let $f: V\to S$ be a holomorphic vector bundle over $S$, and $h$ a hermitian metric on $V$ in the usual sense. Take a local holomorphic basis $\{e_1,\cdots,e_r\}$ of $V$ over an open subset $U\subset S$ with a set of local coordinates $\{s_1,\cdots,s_m\}$ as above. Let $\{e_i^*\}_{1\leq i\leq r}$ be the dual basis to $\{e_i\}$. Then $f^{-1}(U)$ admits a set of local coordinates $\{s_1,\cdots,s_m,e_1^*,\cdots, e_r^*\}$. Set $h_{ij}=h(e_i,e_j)$ and $H=(h_{ij})$. For each $1\leq k\leq m$, set $\partial_{s_k}H=(\frac{\partial h_{ij}}{\partial s_k})$. Also set 
$$
b_{ij}=\sum_{1\leq k\leq r}e_k^*\frac{\partial h_{kj}}{\partial s_i},\quad 1\leq i\leq m, 1\leq j\leq r.
$$
Define a two-form $\omega_U$ over $U$ by 
\begin{eqnarray*}
\omega_U&=&\sqrt{-1}(\sum_{1\leq k,l\leq r}h_{kl}d e_k^*\wedge d\bar e_l^*  \\
&+&\sum_{1\leq i\leq m,1\leq j\leq r}b_{ij}ds_i\wedge d \bar e_j^* +\sum_{1\leq i\leq m,1\leq j\leq r}\bar b_{ij}de_j^*\wedge d \bar s_i\\
&+& \sum_{1 \le i, j \le m} \left( \sum_{1 \le k, l \le r} e_k^* \bar{e}_l^* \frac{\partial^2 h_{kl}}{\partial s_i \partial \bar{s}_j} \right) ds_i \wedge d\bar{s}_j). 
\end{eqnarray*}
It is straightforward to verify that $\{\omega_U\}_{U\subset S}$ glue together into a global two-form $\omega$. In fact \footnote{This insight is due to Nianzi Li.}, 
$$\omega = \sqrt{-1} \partial \bar{\partial}\sum_{k,l} h_{kl}(s) e_k^* \bar{e}_l^*.$$
Consequently, $d\omega=0$. Now we compare the chern connection associated to $\omega$ in the above sense with the chern connection associated to $h$ in the classical sense. It is a local check. The chern connection $\nabla^h$ has the connection one-form with respect to $\{e_i\}$ which reads
$$
\nabla^h(e_i)=\sum_{1\leq j\leq r, 1\leq k\leq m} \nabla^k_{ij} e_j\otimes ds_k,\quad \nabla^k_{ij}=(\partial_{s_k}H\cdot H^{-1})_{ij}. 
$$
So the associated transversal foliation to $\nabla^h$ over $U$ takes the form
$$
\partial_{s_i}\mapsto \partial_{s_i}+\sum_{1\leq j\leq r}(\sum_{1\leq k\leq r}\nabla^{i}_{kj}e^*_k)e_j.
$$
Here we regard $\{e_1,\cdots,e_r\}$ as a local holomorphic basis for $T_{V/S}$. On the other hand, by the local expression for chern connection as above, one finds immediately that the coefficient in front of $e_j$ is exactly the one in the above transversal foliation. Hence they are equal. 
\end{example}

\begin{example}\label{hermitian metrics on projective bundles}
Let $f: \P(V)\to S$ be the associated projective bundle. The hermitian metric $h$ induces a fiberwise Fubini-Study metric $\bar h$ on $\P(V)$. Let $\sO(1)$ be the tautological line bundle which is a holomorphic subbundle of $f^*V$. So it is equipped with the induced metric from $f^*h$ over $f^*V$. Let $\omega$ be the curvature form of this metric on $\sO(1)$. We claim that the chern connection associated to $\omega$ is the chern connection of $\bar h$ on the projective bundle $\P(V)$ in the classical sense. Again, we verify it locally. So we may assume $V$ is trivial, and we shall use notations from the previous example. We shall demonstrate the case $m=1, r=2$ only, since the verification for the general case is essentially the same. Then $\P(V)\cong S\times \P^{1}$ with $[e_1^*:e_2^*]$ the homogenous coordinate on $\P^{1}$. Set $x=\frac{e_1^*}{e_2^*}$. Thus $\{s,x\}$ forms a set of local coordinates of $S\times U$, where $U\subset \P^{1}$ is the principal open subset defined by $e_{2}^*\neq 0$. The chern connection associated to $\bar h$ is induced from the chern connection associated to $h$ on $V$. To deduce it, we make the change of coordinates $e_1^*\mapsto xe_2^*, e_2^*\mapsto e_2^*$. Then it is easy to get the expression of $a_{11}e_1^*e_1+a_{12}e_1^*e_2+a_{21}e_2^*e_1+a_{22}e_2^*e_2$ under the new coordinates, which is
$$
[-a_{12}x^2+(a_{11}-a_{22})x+a_{21}]\partial_{x}+(a_{12}xe_2^*+a_{22}e_2^*)e_2. 
$$
Therefore, we get the local expression of chern connection as follows
$$
\partial_s\mapsto \partial_s+[-a_{12}x^2+(a_{11}-a_{22})x+a_{21}]\partial_{x}, 
$$
where
\begin{eqnarray*}
a_{12}&=&(h_{11}\partial_sh_{12}-h_{12}\partial_sh_{11})(h_{11}h_{22}-h_{12}h_{21})^{-1},\\
a_{11}-a_{22}&=&(h_{22}\partial_{s}h_{11}-h_{11}\partial_sh_{22}+h_{12}\partial_sh_{21}-h_{21}\partial_{s}h_{12})(h_{11}h_{22}-h_{12}h_{21})^{-1},\\
a_{21}&=&(h_{22}\partial_sh_{21}-h_{21}\partial_sh_{22})(h_{11}h_{22}-h_{12}h_{21})^{-1}.   
\end{eqnarray*} 
On the other hand, set 
$$
f(s,x)=|xe_1+e_2|^2_{h}=|x|^2h_{11}+xh_{12}+\bar{x}h_{21}+h_{22}.
$$
Then the curvature form of the induced metric on $\sO(1)\subset f^*V$ (locally determined by $xe_1+e_2\in \{e_1,e_2\}$) is given by 
$$
\omega=\frac{\sqrt{-1}}{2}\partial\bar\partial \log f=(\omega_{ij}).
$$
Therefore, by the above local expression of the associated chern connection to $\omega$, we deduce that  
$$
\partial_s\mapsto \partial_s-\frac{\omega_{12}}{\omega_{11}}\partial_x,
$$
where 
\begin{eqnarray*}
\omega_{11}&=&f^{-2}(fh_{11}-|xh_{11}+h_{21}|^2)=f^{-2}(h_{11}h_{22}-h_{12}h_{21}),\\
\omega_{12}&=&f^{-2}[f(x\partial_sh_{11}+\partial_sh_{21})-\partial_sf(xh_{11}+h_{21})]=f^{-2}(x^2(h_{12}\partial_sh_{11}-h_{11}\partial_sh_{12})\\
           &+&x(h_{12}\partial_sh_{21}+h_{22}\partial_sh_{11}-h_{21}\partial_sh_{12}-h_{11}\partial_sh_{22})+h_{22}\partial_sh_{21}-h_{21}\partial_sh_{22}.
\end{eqnarray*} 
Now it is clear that these two expressions are the same.  The claim is proved. When $S$ is a compact Riemann surface, then GAGA implies that $f$ is algebraic. By Tsen's theorem, any projective bundle over a smooth projective curve is the projectification of a vector bundle. However, it is not the case when $\dim S\geq 2$. But when it is equipped with a relative ample line bundle $L$, it is still meaningful to study the chern connection in the new sense, which is associated to the curvature form representing $c_1(L)$.  
\end{example}
Next, we want to discuss a condition on an almost connection on a holomorphic fibration so that we can talk about a curvature tensor which is the obstruction for it to be a holomorphic one. 
\begin{lemma}\label{curvature of type (1,1)}
Let $f: X\to S$ be a holomorphic fibration and $\partial$ an almost connection on $f$, viz. on $(\alpha,T_X)$. Then the followings hold true:
\begin{itemize}
    \item [(1)] Let the association $\partial_{s_i}\mapsto \partial_{s_i}+\sum_jc_{ij}\partial_{t_j}$ be the local expression of $\partial$ with respect to a set of holomorphic coordinates $\{s_i, t_j\}_{1\leq i\leq m, 1\leq j\leq r}$ of $f$ as above. Then $\frac{\partial c_{ij}}{\bar \partial_{t_k}}=0$ for any $i,j,k$ if and only if the following property
    $$
    [\bar T_{X/S},\textrm{im}(\partial)]\subset \bar T_{X/S}
    $$
    holds. When one of these properties of $\partial$ is satisfied, we say that it is holomorphic along vertical direction. 
    \item [(2)] For an almost connection $\partial$ which is holomorphic along vertical connection, there is a well-defined differentiable bundle map $F^{1,1}: T_S\otimes \bar T_S\to f_*T_{X/S}$, which is locally given by
    $$
    \partial_{s_i}\otimes \bar \partial_{s_j} \mapsto \overline{[\partial(\partial_{s_i}),\bar \partial_{s_j}]}, 
    $$
    such that it is zero if and only if $\partial$ is holomorphic.
\end{itemize}
\end{lemma}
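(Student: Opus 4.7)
The plan is to reduce both parts of the lemma to explicit local computations in the coordinate system $(s_1,\ldots,s_m,t_1,\ldots,t_r)$ used in the proof of Proposition \ref{almost cmoplex structures in terms of d bar operators}. Part (1) is a direct Leibniz calculation; the substantive content in (2) lies in checking that the resulting local bracket formula glues into a well-defined bundle map on $S$.

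For (1), I compute
$$
[\bar\partial_{t_k},\, \partial(\partial_{s_i})] \;=\; [\bar\partial_{t_k},\partial_{s_i}] + \sum_j [\bar\partial_{t_k},\, c_{ij}\partial_{t_j}] \;=\; \sum_j \frac{\partial c_{ij}}{\partial \bar t_k}\, \partial_{t_j},
$$
the first bracket vanishing because coordinate vector fields commute and the second collapsing by the Leibniz rule. The right-hand side lies in $T_{X/S}$, whereas the requested inclusion $[\bar T_{X/S},\,\mathrm{im}(\partial)]\subset \bar T_{X/S}$ would force it into $T_{X/S}\cap \bar T_{X/S}=0$. Hence the two conditions are equivalent to the vanishing of every $\partial c_{ij}/\partial \bar t_k$.

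For (2), assuming vertical holomorphicity from (1), the analogous calculation gives
$$
[\partial(\partial_{s_i}),\, \bar\partial_{s_j}] \;=\; -\sum_k \frac{\partial c_{ik}}{\partial \bar s_j}\, \partial_{t_k} \,\in\, T_{X/S},
$$
so the proposed local formula for $F^{1,1}(\partial_{s_i}\otimes \bar\partial_{s_j})$ makes sense, and its vanishing for all $i,j$ amounts to $c_{ik}$ being holomorphic in $\bar s$ as well; combined with (1), this is exactly the condition that $c_{ik}$ is fully holomorphic, i.e.\ that $\partial$ is a holomorphic connection in the sense of Definition \ref{holomorphic connection}. The remaining task is global well-definedness: ambiguity in the lift of $\bar\partial_{s_j}$ to $X$ by an element of $\bar T_{X/S}$ alters the bracket only by a term in $\bar T_{X/S}$ (invoking (1)), so the $T_{X/S}$-valued output is unchanged; $\sO_S$-bilinearity of the coefficient expression is immediate because $c_{ik}$ is $\bar t$-independent; and for invariance under a holomorphic change of coordinates $s'_b=s'_b(s)$, $t'_a=t'_a(s,t)$, I would differentiate the transformation rule
$$
\sum_b \frac{\partial s'_b}{\partial s_i}\, c'_{ba} \;=\; \frac{\partial t'_a}{\partial s_i} + \sum_j c_{ij}\, \frac{\partial t'_a}{\partial t_j},
$$
obtained by re-expressing $\partial(\partial_{s_i})$ in the primed basis, with respect to $\bar s'_c$. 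By (1) together with the holomorphicity of $s'_b$ in $s$ and $t'_a$ in $(s,t)$, every antiholomorphic derivative of $s'_b$ or $t'_a$ vanishes, leaving $\sum_b (\partial s'_b/\partial s_i)(\partial c'_{ba}/\partial \bar s'_c) = \sum_j (\partial c_{ij}/\partial \bar s'_c)(\partial t'_a/\partial t_j)$, which is precisely the tensorial transformation law needed for the local sections to assemble into a global map $T_S\otimes \bar T_S \to f_*T_{X/S}$.

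The main obstacle is this last coordinate-change step; everything else is immediate. The essential leverage comes from (1) together with holomorphicity of $t'_a$ in $(s,t)$, which jointly kill every non-tensorial piece in the differentiated transformation rule and isolate exactly the tensor $\partial c_{ij}/\partial \bar s_k$.
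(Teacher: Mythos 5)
Your proposal is correct and follows essentially the same route as the paper: part (1) is the identical Leibniz computation $[\bar\partial_{t_k},\partial(\partial_{s_i})]=\sum_j\frac{\partial c_{ij}}{\partial\bar t_k}\partial_{t_j}$, and part (2) establishes tensoriality of $\partial_{s_i}\otimes\bar\partial_{s_j}\mapsto-\sum_k\frac{\partial c_{ik}}{\partial\bar s_j}\partial_{t_k}$ by differentiating the change-of-variable rule for the coefficient matrix and using vertical holomorphicity to kill the non-tensorial terms, exactly as in the paper's proof. If anything, your affine transformation rule for $c'_{ba}$ (retaining the inhomogeneous term $\partial t'_a/\partial s_i$, which is holomorphic and hence harmless after applying $\partial/\partial\bar s'_c$) and your explicit check that the lift ambiguity of $\bar\partial_{s_j}$ only shifts the bracket by $\bar T_{X/S}$ are slightly more careful than the paper's write-up.
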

\begin{proof}
To show the latter property in (1), it suffices to show it for a set of basis elements in $\bar T_{X/S}$. So we may take $\{\bar \partial_{t_k}\}_{1\leq k\leq r}$. As 
$$
[\bar \partial_{t_k},\partial_{s_i}+\sum_jc_{ij}\partial_{t_j}]=\sum_j\frac{\partial c_{ij}}{\bar \partial_{t_k}}\partial_{t_j},
$$
it is clear that the bracket belongs $\bar T_{X/S}$ for any $i,k$ if and only if the former property in (1) holds. This concludes (1). To show (2), we first provide a change of variable formula for $C=(c_{ij})$. As in the proof of Proposition \ref{almost cmoplex structures in terms of d bar operators}, we let $\{s'_1,\cdots,s'_m\}$ be another set of local coordinates of $f(x)\in S$, and $\{s'_1,\cdots,s'_m,t'_1,\cdots,t'_r\}$ another set of local coordinates of $x\in X$.  Set $S=(s_{ij})_{m\times m}$ with $s_{ij}=\frac{\partial s'_j}{\partial s_i}$ and $T=(t_{ij})_{r\times r}$ with $t_{ij}=\frac{\partial t'_j}{\partial t_i}$. Then $S,T$ are matrices of holomorphic functions in $\{s_i,t_j\}$s. Let $C'=(c'_{ij})_{m\times r}$ be the representation matrix of $\partial_{\omega}$ with respect to $\{s'_i,t'_j\}$s. By the chain rule, we obtain the equality of matrices 
$$
C'=S^{-1}\cdot C\cdot T.
$$
For (2), we need to show the association
$$
\partial s_i\otimes \bar \partial s_j\mapsto -\sum_k\frac{\partial c_{ij}}{\bar \partial s_j}\partial t_k
$$
is invariant under the above change of variables. Because the almost connection is holomorphic along vertical direction by condition, it follows from the chain rule that
$$
\frac{\partial c'_{ij}}{\bar \partial s'_k}=\sum_l\frac{\partial c'_{ij}}{\bar \partial s_l}\frac{\bar \partial s_{l}}{\bar \partial s'_k}.
$$
Now the invariance follows immediately from the change of variable formula for $C'$ given as above. Therefore we have obtained a curvature tensor of type $(1,1)$ in base with coefficient in $f_*T_{X/S}$. By definition, $\partial$ is holomorphic when $c_{ij}$s are holomorphic functions in $\{s_i,t_j\}$s, that is $\frac{\partial c_{ij}}{\bar \partial s_k}=\frac{\partial c_{ij}}{\bar \partial t_l}=0$ for all $i,j,k,l$. The remaining statement follows just because of the condition on $\partial_{\omega}$ and the above local expression of the curvature tensor.    
\end{proof}

Finally, we shall define the complex conjugation of an almost Higgs field on $f$. First, let us notice that, for each $s\in S$, the identity component of the automorphism group $\Aut^0(X_s)$ has a natural structure of connected complex Lie group, which is however of infinite dimensional in general. That is, its Lie algebra $\mathfrak{aut}(X_s)=H^0(X_s,T_{X_s})$, the complex linear space of global holomorphic tangent vector fields, is generally of infinite dimensional dimension. Let $\theta: T_S\to f_*T_{X/S}$ be an almost Higgs field on $f$. Then for $s\in S$, $\theta$ defines
a natural $\C$-linear map
$$
\theta(s): T_S(s)\to f_*T_{X/S}(s)\cong \mathfrak{aut}(X_s),
$$
where $T_S(s)$ is of constant finite dimension.
\begin{definition}
A hermitian metric $\omega$ on $f$ is said to be auto-finite, if for each $s\in S$, the real Lie subgroup $\Aut(X_s,\omega_s)$ of holomorphic isometries is of finite dimension which is independent of $s$. For a auto-finite hermitian metric $\omega$, it is said to be $\theta$-adapted, for each $s\in S$, $\textrm{im}(\theta_s)$ is contained in the complexification $\mathfrak{aut}(X_s,\omega_s)_{\C}\subset \mathfrak{aut}(X_s)$.    
\end{definition}
\begin{example}\label{automorphism groups}
Let $f: V\to S$ be a holomorphic vector bundle and $\omega$ a hermitian metric on $f$ as given in Example \ref{hermitian  metrics on vector bundles}. Then $\Aut^0(X_s)\cong \Aut^0(\C^r)$ is infinite dimensional when $r>1$. However, $\Aut^0(X_s,\omega_s)$ is isomorphic to the extension of $U(r)$ by $\C^r$, and hence $\omega$ is auto-finite. When $f$ is a fibration of compact complex manifolds, the automorphism groups of its fibers are of finite dimension by a result Douady. In such a case, any hermitian metric on $f$ is auto-finite. 
\end{example}
 
Note that when $\omega$ is auto-finite, $\Aut^0(X/S,\omega):=\cup_{s\in S}\Aut^0(X_s,\omega_s)$ is a principal $K$-bundle over $S$, where $K$ is a connected compact Lie group. Here is a useful observation: When $\omega$ is further $\theta$-adapted for a holomorphic Higgs field $\theta$, we actually obtain a principal $G$-Higgs bundle over $S$, where $G$ is the complexification of $K$.  
\begin{definition}
Let $\theta$ be an almost Higgs field on $f$, and $\omega$ a auto-finite and $\theta$-adapted hermitian metric on $f$. The complex conjugation $\bar \theta_{\omega}$ of $\theta$ with respect to $\omega$ is the differentiable section of $\bar \Omega_S\otimes f_*T_{X/S}$ whose value at $s\in S$ is defined by sending $\bar \partial_s$ to the minus of the complex conjugation of $\theta(s)(\partial_s)$ with respect to the real form $\mathfrak{aut}(X_s,\omega_s)$ for any anti-holomorphic tangent vector $\bar \partial_s$ at $s$.
\end{definition}

\begin{example}\label{complex conjugation for vector bundles}
Let $(V,\theta)$ be a holomorphic Higgs bundle over $S$. Let $\omega$ be the hermitian metric on $f: V\to S$ as given in Example \ref{hermitian  metrics on vector bundles}. For $\theta$ being linear, $\omega$ is $\theta$-adapted. Let us compute the complex conjugation of $\theta$ with respect to $\omega$ explicitly. Assume $S$ to be local with a set of local coordinates $\{s_1,\cdots,s_m\}$, and $V$ is trivial with a set of holomorphic basis $\{e_1,\cdots,e_r\}$. Let 
$$
\theta=\sum \theta_k\otimes ds_k, \quad \theta_k=\sum_{i,j} \theta^k_{ij}(s)e_i^*\otimes e_j.
$$
Let $(\bar\theta^k_{ij})$ be the adjoint of the matrix $(\theta^k_{ij})$ with respect to the metric $h$. The associated Higgs field on $V\to S$ as holomorphic fiber bundle is given by
$$
\partial_{s_k}\mapsto \sum_j(\sum_{i}\theta^k_{ij}e_i^*)e_j,
$$
where we view $\{e_j\}_{1\leq j\leq r}$ as a holomorphic basis of $T_{V/S}$ and $\{e_i^*\}_{1\leq i\leq r}$ as elements of $\sO_V$. If no confusion will arise, we denote it again by $\theta$. Claim that the complex conjugation $ \bar\theta_{\omega}$ of $\theta$ with respect to $\omega$ is given by
$$
\bar \partial_{s_k}\mapsto \sum_{j}(\sum_i\bar\theta^k_{ij}e_i^*)e_j.
$$
This is simple linear algebra: Let $H=(h_{ij})$ for $h_{ij}=h(e_i,e_j)$, and let 
$$
\Theta^k=(\theta^k_{ij}),\ \bar \Theta^k=(\bar\theta^k_{ij}).$$
Then $\bar \Theta^k=Ad_H(\Theta^{k*})$. On the other hand, via the chosen basis, the natural inclusion $\mathfrak{aut}(V_s,\omega_s)\subset \mathfrak{aut}(V_s,\omega_s)_{\C}$ is identified with 
$$
Ad_{H^{\frac{1}{2}}}(\mathfrak{u}_r)\subset \mathfrak{gl}_r(\C).
$$
Therefore, by decomposing   
$$
\Theta^k=\frac{\Theta^k-Ad_H(\Theta^{k*})}{2}+\frac{\Theta^k+Ad_H(\Theta^{k*})}{2}
$$
into the sum of its real and imaginary parts, we immediately obtain the complex conjugation with respect to $\omega$ as follows:
$$
\bar \Theta^k=\frac{\Theta^k-Ad_H(\Theta^{k*})}{2}-\frac{\Theta^k+Ad_H(\Theta^{k*})}{2}=Ad_H(-\Theta^{k*}).
$$
The claim is proved by noticing the minus in the definition of complex conjugation given above.
\end{example} 
\section{Simpson mechanism}
In this section, we shall reformulate Simpson's construction in \cite[Construction \S1]{Si1} in a way that generalizes to the fiber bundle setting. Our setup is as follows: Let $\alpha: Z \to S$ be a differentiable fiber bundle over $S$, $T_A$ and $T_B$ are two integrable complex structures on $T_{Z_{\R}/S}$, and an automorphism of complex vector bundle $\beta: \frac{T_{Z_{\C}}}{\bar T_A} \to \frac{T_{Z_{\C}}}{\bar T_B}$ such that $\beta$ restricts to an automorphism $\beta: T_A\cong T_B$ and induces the identity map on $\alpha^*T_{S_{\C}}$. When $T_A=T_B$, a simple choice of $\beta$ is the identity map. \\

{\itshape From Higgs fields to almost connections:} Let $g: Y\to S$ be a holomorphic fibration, whose underlying complex fiber structure is $(\alpha,T_B)$, which is equipped with a holomorphic Higgs field $\theta$ \footnote{It seems reasonable to call the pair $(g,\theta)$ again by a Higgs bundle over $S$. When $g$ is a holomorphic vector bundle and $\theta$ is a \emph{linear} Higgs field on $g$, the pair $(g,\theta)$ is said to be a \emph{linear} Higgs bundle. Likewise, one may call a pair $(f,\nabla)$ simply by a flat bundle over $S$, where $f: X\to S$ is a holomorphic fibration equipped with a transversal foliation, and a linear flat bundle when it arises from the vector bundle setting.}. Let $\bar \partial_g$ be the complex structure of $g$, regarded as a $\bar \partial$-operator on $g$, namely
$$
\bar \partial_g: g^*\bar T_{S}\to \frac{T_{Y_{\C}}}{\bar T_{Y/S}}.
$$
Let $\omega$ be a $\theta$-adapted hermitian metric on $g$, and let $\bar \theta_{\omega}$ be the complex conjugation of $\theta$ with respect to $\omega$, which is a differentiable bundle morphism
$$
\bar \theta_{\omega}: \bar T_S\to g_*T_{Y/S}. 
$$
By abuse of notation, the following composite map  
$$
g^*\bar T_S\stackrel{g^*\bar \theta_{\omega}}{\longrightarrow} g^*g_*T_{Y/S}\to T_{Y/S}\to \frac{T_{Y_{\C}}}{\bar T_{Y/S}}
$$
is still denoted by $\bar \theta_{\omega}$. Now we define a differentiable bundle morphism $\alpha^*\bar T_S\to \frac{T_{Z_{\C}}}{\bar T_A}$ by the formula
\begin{equation}
\bar\partial_{\omega}:=\beta^{-1}\circ (\bar\partial_{g}+\bar\theta_{\omega}).   
\end{equation}

Since its composite with the projection $\frac{T_{Z_{\C}}}{\bar T_A}\to \alpha^*\bar T_S$ is the identity, it defines an almost complex structure on $(\alpha,T_A)$. By Example \ref{complex conjugation for vector bundles}, the above formula for a linear Higgs bundle and $\beta=id$ is exactly the reconstruction formula for the complex structure of the corresponding linear flat bundle. Set $T_A\subset T_{\omega}$ to be the almost complex structure on $\alpha$ corresponding to $\bar \partial_{\omega}$. Next, we define an almost connection on $(\alpha,T_{\omega})$ by the formula 
\begin{equation}
\nabla_\omega:=\partial_\omega+2\beta^{-1}\circ \theta,
\end{equation}
where $\partial_{\omega}$ is the Chern connection associated to $\bar \partial_{\omega}$ with respect to $\omega$. When $(g,\theta)$ is a linear Higgs bundle, $\beta=id$ and $\omega$ is a hermitian metric on $g$ in Example \ref{hermitian  metrics on vector bundles}, the above formula is the reconstruction formula for connection.   \\

{\itshape From connections to almost Higgs fields:} Let $f: X\to S$ be a holomorphic fibration over $S$, whose underlying complex fiber bundle structure is $(\alpha,T_A)$, which is equipped with a holomorphic connection $\nabla$ on $f$. For a hermitian metric $\omega$ on $f$, we define an almost Higgs field on $(\alpha,T_B)$ by the formula
\begin{equation}
\theta_{\omega}:=\frac{1}{2}\beta\circ (\nabla-\partial_{\omega}).    
\end{equation}
In order to proceed further, we need to choose $\omega$ so well that $\omega$ is $\theta_{\omega}$-adapted. This will be automatically satisfied in two cases: The linear case, and the case when $\Aut^0(X/S)$ is a principal $G$-bundle and $\Aut^0(X/S,\omega)$ is principal $K$-bundle, where $K$ is a real form of $G$. Under this assumption, the complex conjugation $\bar \theta_{\omega}$ of $\theta_{\omega}$ with respect to $\omega$ is well-defined. Next we define an almost complex structure on $(\alpha,T_B)$ by the formula
\begin{equation}
\bar\partial_{\omega}:=\beta\circ\bar\partial_f-\bar \theta_{\omega}.   
\end{equation}
Again, when $(f,\nabla)$ is a linear flat bundle, $\beta=id$ and $\omega$ is a hermitian metric on $f$ in Example \ref{hermitian  metrics on vector bundles}, the above formulas are the reconstruction formulas for the corresponding linear Higgs bundle. 
\begin{remark}
It is manifest that the above two constructions are converse to each other. Note that \emph{not} every operator appearing in Simpson's construction has been generalized in our setting: The operator $\delta^{''}$ (see \cite[Construction \S1]{Si1}) is the unique operator of type $(0,1)$ such that $\delta^{''}+d'$ (where $d'$ is the holomorphic flat connection $\nabla$ in our terminology) preserves the hermitian metric. We may also do so if we include anti-holomorphic objects into the study. Consequently, we have to replace it by $\bar \partial-\bar\theta$, and hence obtain the above construction from connections to almost Higgs fields. The construction from Higgs fields to almost connections is literally the converse process. 
\end{remark}

\section{Nonlinear harmonic bundles}
In the linear case, a flat bundle $(V,\nabla)$ and its corresponding Higgs bundle $(E,\theta)$ share the same underlying \emph{complex} vector bundle structure over $S$: Let $f: V\to S$ and $g: E\to S$ be the associated geometric vector bundle. Then $T_{V/S}$ and $T_{E/S}$ are canonically identified as complex vector subbundle of $T_{Z_{\R}/S}\otimes \C$, where $Z \to S$ is the underlying differentiable fiber bundle of $f$ (and also $g$). This means that, in the linear case, there is a natural identification of the complex structures on the relative tangent bundle imposed by two corresponding objects. More importantly, each direction of the above construction in the linear case provides a curvature tensor, which is the \emph{only} obstruction to obtain a genuine mirror structure (\cite[Curvature \S1]{Si1}). In order to achieve this property for a general case, it is natural for us to impose a stronger assumption on $\beta$ in the previous setting for Simpson mechanism. Namely, we assume a real automorphism $\beta: T_{Z_{\C}}\to T_{Z_{\C}}$ so that $\beta(T_A)=T_B$, induces the identity on $\alpha^*T_{S_{\C}}$, and preserves the local lifting condition: For a local lifting $\tilde{\bar{\partial}}: \alpha^*\bar T_S\to T_{Z_{\C}}$ of a $\bar \partial$-operator on $(\alpha, T_A)$, or a local lifting $\tilde \partial: \alpha^*T_S\to T_{Z_{\C}}$ of an almost connection $\partial$ on $(\alpha,T_A)$, the local lifting condition holds
$$
[\bar T_B, \beta(im(\tilde{\bar{\partial}})]\subset \bar T_B;\quad [\bar T_B, \beta(im(\tilde \partial))]\subset \bar T_B.
$$

Now let $g: Y\to S$ be a holomorphic fibration whose underlying complex fiber bundle structure is $(\alpha,T_B)$. It is also equipped with a holomorphic Higgs field $\theta$. Let $\omega$ be a hermitian metric on $g$ which is preserved by $\beta$, and such that the associated chern connection $\partial_{\omega}$ is holomorphic along vertical direction (see Lemma \ref{curvature of type (1,1)}) and is $\theta$-adapted. Consider the following differentiable transversal foliation
$$
D_{\omega}:=\nabla_{\omega}+\bar \partial_{\omega}: \alpha^*T_{S_{\C}}\to \frac{T_{Z_{\C}}}{\bar T_{A}}.
$$
Here we have embedded $T_{\omega}$ into $\frac{T_{Z_{\C}}}{\bar T_A}$. Since the Lie algebra structure on $T_{Z_{\C}}$ does not descend to a Lie algebra structure on the quotient $\frac{T_{Z_{\C}}}{\bar T_{A}}$ (that requires $\bar T_A$ be an ideal which is not the case in general), we shall not immediately form a curvature tensor as in the linear case. Instead, we shall consider separately three curvature tensors according to their types. \\

{\itshape Type $(0,2)$}: Consider the almost complex structure $\bar \partial_{\omega}$ in the above construction. We shall verify the local lifting condition in Lemma \ref{integrability of almost complex structure}. Since $\beta$ preserves the local lifting condition, it is equivalent to show the condition for $\bar \partial_g+\bar \theta_{\omega}$. As $\bar \partial_g$ is integrable, it suffices to show $[\bar T_B,\bar \theta_{\omega}]\subset \bar T_B$. As $\theta$ is holomorphic, it is holomorphic along vertical direction. But since the complex conjugation does not change the holomorphicity along vertical direction, $\bar \theta_{\omega}$ is also holomorphic along vertical direction. So the local lifting condition holds for $\bar \partial_{\omega}$. Then Lemma \ref{integrability of almost complex structure} gives us a curvature tensor $F^{0,2}$ which is a differentiable global section of $\bar \Omega^2_{S}\otimes \alpha_*T_{\omega}$ such that $F^{0,2}=0$ if and only if $\bar \partial_{\omega}$ is integrable. \\

{\itshape Type $(1,1)$}: Suppose $F^{0,2}=0$. We proceed to verify the condition in Lemma \ref{curvature of type (1,1)} (1) for $\nabla_{\omega}$, that is 
$$
[\bar T_A, \textrm{im}(\partial_{\omega}+2\beta^{-1}\circ\theta)]\subset \bar T_A. 
$$
As $\beta$ is local lifting condition preserving and $\theta$ is holomorphic, it is equivalent to verify the condition
$$
[\bar T_B, \textrm{im}(\beta\circ \partial_{\omega})]\subset \bar T_B. 
$$
Since $\beta$ preserves $\omega$, $\beta$ transforms the chern connection associated to $\bar \partial_{\omega}$ to the chern connection associated to $\bar \partial_g+\bar \theta_{\omega}$. To get the condition as requested, we shall compare the chern connections associated to $\bar \partial_g$ and $\bar \partial_g+\bar \theta_{\omega}$ respectively. We write the former by $\partial$ and the latter by $\partial^{\theta}$ for a temporary use. Take a set of holomorphic local coordinates $\{s_i,t_j\}$ of $g$. A holomorphic basis for $T_Y$ is given by $\{\partial_{s_i},\partial_{t_j}\}$, while a holomorphic basis for $\beta\circ T_{\omega}$, the holomorphic tangent bundle for the new complex structure $\bar \partial_g+\bar \theta_{\omega}$, is given by $\{\partial_{s_i}+\sum_{k}\overline{\bar{\theta}_{ik}}\bar \partial_{t_k},\partial_{t_j}\}$. Here $\bar \theta_{\omega}$ is given by the association $\partial_{s_i}\mapsto \sum_k\bar \theta_{ik}\partial_{t_k}$ and $\overline{\bar \theta_{ik}}$ is the complex conjugation of the function $\bar \theta_{ik}$. We may write $$
\partial^{\theta}(\partial_{s_i})=\partial(\partial_{s_i})+\sum_ja_{ij}\partial_{t_j}+\sum_jb_{ij}\bar \partial_{t_j}.$$ 
The coefficients $a_{ij}$s and $b_{ij}$s are uniquely determined by two conditions: $$(\partial^{\theta}(\partial_{s_i}),\partial_{t_j})_{\omega}=0$$ for any $j$, and $\partial^{\theta}(\partial_{s_i})\in T_{\omega}$. Using the fact that $(\bar \partial_{t_i},\partial_{t_j})_{\omega}=0$ and the matrix $((\partial_{t_i},\partial_{t_j})_{\omega})$ is invertible, we easily obtain that 
$$
a_{ij}=0,\quad b_{ij}=\overline{\bar \theta_{ij}}. 
$$
But then since $\partial$ is holomorphic along vertical direction by condition, it follows that
$$
[\bar \partial_{t_k}, \partial(\partial_{s_i})+\sum_j\overline{\bar \theta_{ij}}\bar \partial_{t_j}]\in \bar T_B.
$$
So we are done. By Lemma \ref{curvature of type (1,1)}, we obtain the second curvature tensor $F^{1,1}$ associated to $\nabla_{\omega}$, which is a differentiable global section of $\Omega_S\otimes \bar \Omega_{S}\otimes \alpha_*T_{A}$, and which vanishes if and only if $\nabla_\omega$ is a holomorphic connection with respect to the complex structure $\bar \partial_{\omega}$. Because of the assumption on $\beta$, we may also interpret $F^{1,1}$ as a global section of $\Omega_S\otimes \bar \Omega_{S}\otimes g_*T_{Y/S}$.\\

{\itshape Type $(2,0)$}: Suppose $F^{1,1}=0$. Let $f: X\to S$ be the holomorphic fibration corresponding to $\bar \partial_{\omega}$. It follows from Definition \ref{holomorphic connection} that the holomorphic connection $\nabla_{\omega}$ on $f$ is integrable if and only if the curvature tensor $F^{2,0}$ vanishes, which is a holomorphic global section of $\Omega^2_S\otimes f_*T_{X/S}$. Like above, we may reinterpret $F^{2,0}$ as a global section of $\Omega_S^2\otimes g_*T_{Y/S}$, which is however not necessarily holomorphic.\\

We may summarize the above discussions into a clean statement. First we make a definition.
\begin{definition}
Let $(\alpha,T_{rel})$ be a complex fiber bundle. Let $D^{0,1}$ be a $\bar \partial$-operator on $(\alpha,T_{rel})$ and $D^{1,0}$ be an almost connection on $(\alpha,T)$, where $T\subset T_{Z_{\C}}$ corresponds to $D^{0,1}$. Form $D=D^{1,0}+D^{0,1}: \alpha^{*}T_{S_{\C}} \to \frac{T_{Z_{\C}}}{\bar T_{rel}}$, which is a splitting of the natural projection $\frac{T_{Z_{\C}}}{\bar T_{rel}}\to \alpha^{*}T_{S_{\C}}$. We say $D$ is holomorphic along vertical direction if $D^{0,1}$ admits the local lifting condition in Lemma \ref{integrability of almost complex structure} and $D^{1,0}$ satisfies $[\bar T_{rel},\textrm{im}(D^{1,0})]\subset \bar T_{rel}$ as in Lemma \ref{curvature of type (1,1)}. 
\end{definition}
It is suggestive to write the above condition by 
$$
[\bar T_{rel},\textrm{im}(D)]\subset \bar T_{rel}.
$$
 
\begin{proposition}\label{vanishing of F}
Notations as above. Then $D_{\omega}$ satisfies the holomorphic along vertical direction condition. There exists a curvature tensor $F_{\omega}$ attached to $D_{\omega}$, which is a differentiable global section of the complex vector bundle $\Omega^2_{S_{\C}}\otimes g_*T_{X/S}$ over $S$, such that the pair $(\bar \partial_{\omega},\nabla_{\omega})$ constructed as above defines a holomorphic flat bundle $(f,\nabla)$ with $(\alpha,T_A)$ as the underlying complex fiber bundle, if and only if $F_{\omega}=0$.
\end{proposition}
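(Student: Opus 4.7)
My plan is to prove the proposition by assembling the three type-components $F^{0,2}_\omega$, $F^{1,1}_\omega$, $F^{2,0}_\omega$ discussed in the three preparatory paragraphs immediately before the statement. First I would verify that $D_\omega = \nabla_\omega + \bar\partial_\omega$ is holomorphic along vertical direction. This requires two separate checks: the local lifting condition of Lemma~\ref{integrability of almost complex structure} for $\bar\partial_\omega = \beta^{-1} \circ (\bar\partial_g + \bar\theta_\omega)$, and the condition $[\bar T_A, \textrm{im}(\nabla_\omega)] \subset \bar T_A$ of Lemma~\ref{curvature of type (1,1)} for $\nabla_\omega = \partial_\omega + 2\beta^{-1} \circ \theta$. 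Since $\beta$ preserves the local lifting condition by hypothesis, both checks translate via $\beta$ into the corresponding statements on the $T_B$-side: for the first, one needs $\bar\partial_g$ to be integrable (true by construction) and $\bar\theta_\omega$ to be vertical-holomorphic (true because $\theta$ is holomorphic along vertical direction and fiberwise complex conjugation with respect to $\omega$ preserves that property); for the second, one needs $\beta \circ \partial_\omega$, which by $\beta$-invariance of $\omega$ is the Chern connection of $\bar\partial_g + \bar\theta_\omega$, to be vertical-holomorphic, and this reduces to the analogous property for the Chern connection associated to $\bar\partial_g$ itself, which is our standing assumption.

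Once vertical-holomorphicity is in hand, the three components fall out in sequence. Lemma~\ref{integrability of almost complex structure} produces $F^{0,2}_\omega$, a global section of $\bar\Omega^2_S \otimes \alpha_* T_\omega$ that vanishes exactly when $\bar\partial_\omega$ is integrable, i.e.\ when it arises from a genuine holomorphic fibration $f: X \to S$ with $(\alpha, T_A)$ as underlying complex fiber bundle. Assuming $F^{0,2}_\omega = 0$, Lemma~\ref{curvature of type (1,1)}(2) provides $F^{1,1}_\omega \in \Gamma(\Omega_S \otimes \bar\Omega_S \otimes f_* T_{X/S})$ which vanishes precisely when $\nabla_\omega$ is a holomorphic connection on $f$. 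Assuming both vanishings, Definition~\ref{holomorphic connection} furnishes the classical $F^{2,0}_\omega \in \Gamma(\Omega^2_S \otimes f_* T_{X/S})$ measuring flatness of $\nabla_\omega$. I would then transport each coefficient space to $g_* T_{X/S}$ (using $\beta$ to identify the complex vector bundles $T_A$ and $T_B$ on $Z$) and set $F_\omega := F^{2,0}_\omega + F^{1,1}_\omega + F^{0,2}_\omega$, a single differentiable global section of $\Omega^2_{S_\C} \otimes g_* T_{X/S}$. The equivalence $F_\omega = 0 \Longleftrightarrow (\bar\partial_\omega, \nabla_\omega)$ defines a holomorphic flat bundle is then the conjunction of the three criteria above.

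The main obstacle I expect is bookkeeping around the identifications of coefficient bundles: each curvature component naturally lives in a bundle indexed by a different complex structure, and showing they assemble into a single section of $\Omega^2_{S_\C} \otimes g_* T_{X/S}$ globally requires carefully unpacking the standing hypotheses on $\beta$ (real automorphism carrying $T_A$ to $T_B$, identity on $\alpha^* T_{S_\C}$, $\omega$-preserving, and local-lifting-condition-preserving). A closely related subtlety is showing that fiberwise complex conjugation preserves vertical-holomorphicity for the Higgs field, which is a pointwise but slightly delicate verification on $\mathfrak{aut}(X_s, \omega_s)$; Example~\ref{complex conjugation for vector bundles} already confirms this in the linear case, and the general nonlinear case should go through analogously once the Lie-algebraic description of the complex conjugation is used.
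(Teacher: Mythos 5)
Your overall plan coincides with the paper's: the three preparatory paragraphs supply $F^{0,2}$, $F^{1,1}$, $F^{2,0}$ together with the two vertical-holomorphicity checks for $\bar\partial_\omega$ and $\nabla_\omega$, and the proof consists of summing the three tensors. However, there is one genuine gap, and it sits exactly at the only point where the paper's proof does new work. By Lemma \ref{integrability of almost complex structure}, $F^{0,2}$ a priori takes values in $\alpha_*T_\omega$, where $T_\omega\subset T_{Z_{\C}}$ is the full almost complex structure attached to $\bar\partial_\omega$; this bundle has rank $\rk\, T_A+\rk\, T_S$ and strictly contains $T_A$. Your proposed transport of coefficient bundles ``using $\beta$ to identify $T_A$ and $T_B$'' therefore does not apply to $F^{0,2}$ as it stands: $\beta$ identifies only the vertical subbundles, and a general section of $\bar\Omega_S^2\otimes\alpha_*T_\omega$ has no reason to land in $\bar\Omega_S^2\otimes g_*T_{Y/S}$. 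What is missing is the verification that $F^{0,2}$ refines to a section valued in the vertical part.

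The paper supplies this by exploiting the special form of the operator $\bar\partial_g+\bar\theta_\omega$: locally $\beta\circ\bar T_\omega$ is generated by $\{\bar\partial_{t_j},\ \bar\partial_{s_i}+\sum_k\bar\theta_{ik}\partial_{t_k}\}$, and since $\bar T_B$ is closed under Lie bracket and $\bar\theta_\omega$ is holomorphic along vertical direction, the only terms surviving in the brackets computing $F^{0,2}$ are of the form $\frac{\partial \bar\theta_{ik}}{\bar\partial_{s_j}}\partial_{t_k}$, which lie in $T_B\subset\beta\circ T_\omega$. You flag ``bookkeeping around the identifications of coefficient bundles'' as your main expected obstacle, but you mischaracterize it as an issue about unpacking the hypotheses on $\beta$; the real content is this bracket computation, which is independent of $\beta$. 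Once it is supplied, the rest of your argument (including the identifications for $F^{1,1}$ and $F^{2,0}$, whose values already lie in the relative tangent bundles) goes through as you describe.
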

\begin{proof}
Except that $F^{0,2}$ actually belongs to $\Gamma(S,\bar \Omega_S^2\otimes g_*T_{Y/S})$, all have been proved in the previous paragraphs. This holds because of the special form of the $\bar \partial$-operator $\bar \partial_g+\bar \theta_{\omega}$. As $\beta$ is a Lie algebra automorphism, we may reinterpret $F^{0,2}$ as a section of $\bar \Omega^2_S\otimes g_*(\beta\circ T_{\omega})$. Retaining the notation in the paragraph on type $(1,1)$-curvature tensor, $\beta\circ\bar T_{\omega}$ is locally generated by $\{\bar \partial_{t_j},\bar \partial_{s_i}+\sum_k\bar\theta_{ik}\partial_{t_k}\}$. The curvature tensor $F^{0,2}$ results from taking the composite map
$$
\wedge^2 (\beta\circ\bar T_{\omega})\stackrel{[\ ,\ ]}{\to}T_{Z_{\C}}\to \frac{T_{Z_{\C}}}{\beta\circ \bar T_{\omega}}=\beta\circ T_{\omega}. 
$$
But since $\bar T_{B}$ is closed under Lie bracket and $\bar \theta_{\omega}$ is holomorphic along vertical direction, it follows that in the above bracket, only the terms $\{\frac{\partial \bar\theta_{ik}}{\bar \partial_{s_j}}\partial_{t_k}\}$s remain. Clearly, they all lie in $T_{B}\subset T_{\omega}$. We may conclude the proof by setting $F_{\omega}=F^{0,2}+F^{1,1}+F^{2,0}$. 
\end{proof}
Now we reverse the direction. Namely, we start with a holomorphic flat bundle $(f,\nabla)$ with underlying complex fiber bundle structure $(\alpha,T_A)$. Let $\omega$ be a hermitian metric on $f$ which is preserved by $\beta$, whose associated chern connection is horizontal along vertical direction, and which is $\theta_{\omega}$-adapted. Form  
$$
D''_{\omega}:=\bar\partial_{\omega}+\theta_{\omega}: \alpha^*T_{S_{\C}}\to \frac{T_{Z_{\C}}}{\bar T_{B}}.
$$
Argued as before, it follows from our assumption that $\theta_{\omega}$ is holomorphic along vertical direction. So is its complex conjugation $\bar \theta_{\omega}$. Therefore $\bar \partial_{\omega}$ is holomorphic along vertical direction. Lemma \ref{integrability of almost complex structure} provides us a curvature tensor $G^{0,2}\in \Gamma(S,\bar \Omega_S^2\otimes \alpha_*T_{\omega})$, where $T_{\omega}$ corresponds to $\bar \partial_{\omega}$. Using exactly the same reasoning as in the proof of Proposition \ref{vanishing of F}, one may interpret $G^{0,2}$ as an element in $\Gamma(S,\bar \Omega_S^2\otimes f_*T_{X/S})$, whose vanishing is equivalent to the integrability of $\bar \partial_{\omega}$. To obtain the curvature tensor $G^{1,1}\in \Gamma(S,\Omega_S\otimes \bar \Omega_S\otimes f_*T_{X/S})$ of type $(1,1)$, whose vanishing is equivalent to the holomorphicity of $\theta_{\omega}$, one repeats the argument in Lemma \ref{curvature of type (1,1)} and the construction of $F^{1,1}$ given above. Finally, the integrality of $\theta_{\omega}$ is measured by a curvature tensor $G^{2,0}\in \Gamma(S,\Omega^2_S\otimes f_*T_{X/S})$ given by the association $a\wedge b\mapsto \beta^{-1}([\theta_{\omega}(a),\theta_{\omega}(b)])$. 
\begin{proposition}\label{vanishing of G}
Notations as above. Then there exists a curvature tensor $G_{\omega}\in \Gamma(S,\Omega^2_{S_{\C}}\otimes f_*T_{X/S})$ associated to $D_{\omega}^{''}$, which vanishes if and only if the pair $(\bar \partial_{\omega},\theta_{\omega})$ defines a holomorphic Higgs bundle $(g,\theta)$ over $S$, whose underlying complex fiber structure is given by $(\alpha,T_{B})$.   
\end{proposition}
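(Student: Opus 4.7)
The plan is to mirror the proof of Proposition~\ref{vanishing of F}, with the roles of the Higgs side and the flat side exchanged. Most of the raw material is already assembled in the paragraph preceding the statement: three candidate curvature tensors $G^{0,2}$, $G^{1,1}$ and $G^{2,0}$ have been defined, measuring respectively the integrability of $\bar\partial_\omega$, the holomorphicity of $\theta_\omega$, and the Higgs bracket $[\theta_\omega,\theta_\omega]$. What remains is (i) to check that each tensor is actually well defined under our standing hypotheses on $\omega$, (ii) to reinterpret $G^{0,2}$ as taking values in $f_\ast T_{X/S}$ via $\beta$, and (iii) to assemble the three pieces into a single section $G_\omega\in\Gamma(S,\Omega^2_{S_\C}\otimes f_\ast T_{X/S})$ whose vanishing encodes the proposition.

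For (i), I would first observe that $\theta_\omega=\tfrac{1}{2}\beta\circ(\nabla-\partial_\omega)$ is holomorphic along the vertical direction: $\nabla$ is holomorphic, $\partial_\omega$ is holomorphic along the vertical direction by hypothesis, and $\beta$ preserves the local lifting condition by the standing assumption. Complex conjugation with respect to $\omega$ preserves vertical holomorphicity (the same remark invoked in the proof of Proposition~\ref{vanishing of F}), so $\bar\theta_\omega$ is also vertically holomorphic. Combined with the integrability of $\bar\partial_f$, this produces the local lifting condition of Lemma~\ref{integrability of almost complex structure} for $\bar\partial_\omega=\beta\circ\bar\partial_f-\bar\theta_\omega$, giving a priori $G^{0,2}\in\Gamma(S,\bar\Omega_S^2\otimes\alpha_\ast T_\omega)$. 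Once $G^{0,2}=0$, Proposition~\ref{almost cmoplex structures in terms of d bar operators} yields the holomorphic fibration $g:Y\to S$ with $\bar\partial_g=\bar\partial_\omega$ and underlying complex fiber bundle $(\alpha,T_B)$; then the argument of Lemma~\ref{curvature of type (1,1)} applies verbatim to $\theta_\omega$ and produces $G^{1,1}\in\Gamma(S,\Omega_S\otimes\bar\Omega_S\otimes g_\ast T_{Y/S})$ measuring failure of holomorphicity; and once $G^{1,1}=0$ in addition, the assignment $a\wedge b\mapsto \beta^{-1}[\theta_\omega(a),\theta_\omega(b)]$ defines a holomorphic section $G^{2,0}\in\Gamma(S,\Omega_S^2\otimes f_\ast T_{X/S})$ whose vanishing encodes $[\theta_\omega,\theta_\omega]=0$.

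The main obstacle is step (ii), the reinterpretation of $G^{0,2}$ from $\alpha_\ast T_\omega$ into $f_\ast T_{X/S}$. Here I would follow verbatim the computation at the end of the proof of Proposition~\ref{vanishing of F}: pick holomorphic coordinates $\{s_i,t_j\}$ for $f$, write out a local frame for $\bar T_\omega$ using the explicit form $\bar\partial_\omega=\beta\circ\bar\partial_f-\bar\theta_\omega$, and compute the Lie brackets of basis elements. The integrability of $\bar T_A$ kills the purely $\bar\partial_f$-part of the bracket, while the vertical holomorphicity of $\bar\theta_\omega$ kills the vertical--horizontal cross terms; only the horizontal--horizontal brackets survive, and they collapse to expressions of the form $\partial_{\bar s_j}\bar\theta_{ik}$ acting on vertical vectors. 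These are $T_B$-valued, and applying $\beta^{-1}$ on the target furnishes a well defined global section of $\bar\Omega_S^2\otimes f_\ast T_{X/S}$.

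Finally, setting $G_\omega:=G^{0,2}+G^{1,1}+G^{2,0}$ gives the desired section of $\Omega^2_{S_\C}\otimes f_\ast T_{X/S}$ (after identifying $g_\ast T_{Y/S}$ with $f_\ast T_{X/S}$ via $\beta$ on the $(1,1)$-component). Its vanishing is equivalent to the simultaneous vanishing of the three components, which by the discussion of step (i) is precisely the assertion that $\bar\partial_\omega$ integrates to a holomorphic fiber bundle structure $g:Y\to S$ on $(\alpha,T_B)$ and that $\theta_\omega$ is a holomorphic Higgs field on $g$ in the sense of Definition~\ref{holomorphic Higgs field}. Apart from the coordinate bookkeeping hidden in step (ii), every part of the argument is a direct transcription of constructions already established in the preceding sections.
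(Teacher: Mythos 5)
Your proposal is correct and follows essentially the same route as the paper: the paper's formal proof is literally the one line ``set $G_{\omega}=G^{0,2}+G^{1,1}+G^{2,0}$,'' with all the substantive verifications (vertical holomorphicity of $\theta_\omega$ and $\bar\theta_\omega$, the local lifting condition for $\bar\partial_\omega$, the reinterpretation of $G^{0,2}$ via the bracket computation borrowed from the proof of Proposition~\ref{vanishing of F}, and the definition of $G^{2,0}$ by $a\wedge b\mapsto \beta^{-1}([\theta_{\omega}(a),\theta_{\omega}(b)])$) carried out in the paragraph immediately preceding the statement, exactly as you reconstruct them. The only difference is organizational: you fold that preceding discussion into the proof itself, which is arguably a clearer presentation of the same argument.
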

\begin{proof}
Set $G_{\omega}=G^{0,2}+G^{1,1}+G^{2,0}$. 
\end{proof}
\begin{definition}
Let $S$ be a complex manifold. For a holomorphic flat bundle $(f,\nabla)$ (resp. a holomorphic Higgs bundle $(g,\theta)$) over $S$, a hermitian metric $\omega$ on $f$ (resp. $g$) is said to be allowable if it is preserved by $\beta$, the associated chern connection is holomorphic along vertical direction, and it is $\theta_{\omega}$ (resp. $\theta$)-adapted. A harmonic metric on $(f,\nabla)$ (resp. $(g,\theta)$) is an allowable hermitian metric $\omega$ on $f$ (resp. $g$) such that $G_{\omega}=0$ (resp. $F_{\omega}=0$).  
\end{definition}
The notion of a harmonic bundle a la Simpson can now have a much broader meaning. 
\begin{definition}\label{nonlinear harmonic bundle}
Let $S$ be a complex manifold. A nonlinear harmonic bundle over $S$ is a quadruple $(\alpha,T_A,T_B,\beta)$, where $\alpha: Z\to S$ is a differentiable fiber bundle over $S$, $T_A$ and $T_B$ are two integrable complex structures on $T_{Z_{\R}/S}$, $\beta: T_{Z_{\R}}\to T_{Z_{\R}}$ is a local lifting condition preserving bundle automorphism rendering the following commutative diagram
\begin{equation*}
		\begin{gathered}
		\xymatrix{
			0\ar[r]&T_{A} \ar[r] \ar[d]^{\beta} & \frac{T_{Z_{\C}}}{\bar T_A} \ar[d]^{\beta}\ar[r]&\alpha^*T_{S_{\C}}\ar[r]\ar[d]^{id}&0\\
			0\ar[r]&T_{B}\ar[r] & \frac{T_{Z_{\C}}}{\bar T_A}\ar[r]&\alpha^*T_{S_{\C}}\ar[r]&0,
		}
		\end{gathered}  
		\end{equation*}
provided with structures of holomorphhic flat bundle whose underlying complex fiber bundle is $(\alpha,T_A)$ and holomorphic Higgs bundle whose underlying complex fiber bundle is $(\alpha,T_B)$,  which are related by a harmonic metric. A choice of such metric is not part of the data. 
\end{definition}
\section{Relative nonabelian Hodge moduli spaces}
Let $S$ be a smooth complex curve and $X\to S$ be a polarized smooth family of smooth projective curves of genus $\geq 2$. Let $f: M^{s}_{dR}(X/S,r)\to S$ be the relative de Rham moduli space, parametrizing rank $r$ irreducible holomorphic connections on $X_s$, and $g: M^s_{Hig}(X/S,r)\to S$ be the relative Higgs moduli space of rank $r$ stable Higgs bundles of degree zero. The Gauss-Manin connection $\nabla_{GM}$, which is constructed in \cite[\S8]{Si2} as an algebraic connection, comes from the isomonodromy deformation of a flat connection. On the other hand, Example \ref{nonabelian Kodaira-Spencer} equips $g$ with a holomorphic Higgs field. In \cite{Si3}, the Hodge filtration on $f$ is defined, which we shall denote by $F_{hod}$. The grading $Gr_{F_{hod}}(f,\nabla_{GM})$ is defined to be the fiber of $M_{hod}\to \A^1\times S$ over $\{0\}\times S$, equipped with the residual action of the $\G_m$-equivariant extension of $\nabla_{GM}$ over $M_{hod}$, which covers the standard $\G_m$-action on $\A^1\times S$ (here the trivial action of the factor $S$ is taken). Then it has been shown that (see \cite[Theorem 1.2]{FS})
$$
Gr_{F_{hod}}(f,\nabla_{GM})=(g,\theta_{KS}).
$$
By \cite[Theorem 7.18]{Si2}, the nonabelian Hodge correspondence gives a canonical homeomorphism $\alpha_S: M^{s}_{dR}(X/S,r)\cong M^{s}_{Hig}(X/S,r)$. It is clear that $\alpha_S$ gives an isomorphism between $f$ and $g$ as \emph{topological} fiber bundle over $S$. 
\begin{conjecture}\label{identification of differentiable structrues}
Notation as above. Then $\alpha_S$ is an isomorphism of differentiable fiber bundles.     
\end{conjecture}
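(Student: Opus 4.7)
The plan is to reduce the global statement to a local one via Kuranishi-type slices and then to solve Hitchin's equation in families with smooth dependence on all parameters. Throughout, we work on the stable loci, which are smooth complex manifolds.

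\emph{Step 1: Recalling the construction of $\alpha_S$.} For $(V,\nabla)$ an irreducible holomorphic connection on $X_s$, there is a harmonic metric $h$, unique up to positive scalar, giving the decomposition $\nabla=\partial_h+\bar\partial_h+\theta_h+\theta_h^{*_h}$. The associated Higgs bundle is $(V,\bar\partial_h,\theta_h)$, and this assignment defines $\alpha_S$ fiberwise. The opposite direction is constructed symmetrically. Hence the problem is to show that the solution of Hitchin's equation depends smoothly on all parameters appearing in the family.

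\emph{Step 2: Local slices.} Fix a point $[V_0,\nabla_0]$ lying over $s_0\in S$. On an open neighborhood $U\ni s_0$, choose a $C^\infty$ trivialization of the underlying differentiable fiber bundle $X\to S$, so that the family of K\"ahler structures $\{\omega_s\}_{s\in U}$ becomes a smooth family on a fixed $C^\infty$ bundle. Pick a Kuranishi slice $\sS\subset \sA_{dR}$ near $\nabla_0$ inside the space of flat connections on the underlying smooth bundle. Over $U\times \sS$ one obtains a smooth family $\nabla_{s,\sigma}$ of irreducible flat connections such that the induced map to $M^s_{dR}(X/S,r)$ is a smooth submersion onto an open neighborhood of $[V_0,\nabla_0]$.

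\emph{Step 3: Smooth family of harmonic metrics.} Solve Hitchin's equation $F_{D_h}+[\theta_h,\theta_h^{*_h}]=0$ for $h=h_{s,\sigma}$, treating $(s,\sigma)$ as parameters. The classical existence theorem (Corlette--Donaldson--Simpson) provides a solution for each $(s,\sigma)$, unique up to a positive scalar. At each solution the linearization is Fredholm, self-adjoint, and has one-dimensional kernel spanned by the scalar direction, by irreducibility of $\nabla_{s,\sigma}$. After imposing a normalization (e.g.\ fixing a determinant, or an $L^2$-normalization against a smooth reference family of metrics) to remove the scalar ambiguity, the implicit function theorem applied in the appropriate Sobolev completions yields a $C^\infty$ family $h_{s,\sigma}$ of harmonic metrics depending smoothly on $(s,\sigma)$.

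\emph{Step 4: Descent and gluing.} The Higgs data $(\bar\partial_{h_{s,\sigma}},\theta_{h_{s,\sigma}})$ depend smoothly on $(s,\sigma)$, and the resulting map $(s,\sigma)\mapsto [V,\bar\partial_{h_{s,\sigma}},\theta_{h_{s,\sigma}}]\in M^s_{Hig}(X/S,r)$ is gauge-invariant, hence descends to a smooth map on a neighborhood of $[V_0,\nabla_0]$. The local smooth lifts glue: the scalar ambiguity in $h$ is removed by the chosen normalization, or more invariantly because the map $[V,\nabla,h]\mapsto [V,\bar\partial_h,\theta_h]$ is independent of scaling $h$ by a positive constant. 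Applying the same argument in the opposite direction gives smoothness of $\alpha_S^{-1}$, so $\alpha_S$ is a diffeomorphism of fiber bundles over $S$.

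\emph{Main obstacle.} The heart of the argument is the analytic claim in Step 3: smoothness of the solution of Hitchin's equation in the base parameter $s$ together with the slice parameter $\sigma$. The K\"ahler metric on $X_s$, the complex structure of $X_s$, and the flat connection all vary simultaneously, so one must control the non-linear elliptic PDE uniformly as $s$ moves. In particular, the Fredholm index and invertibility of the linearization on the scalar-complement are open conditions in $(s,\sigma)$, and stability is an open condition as well; these are standard but must be verified carefully with the full $s$-dependence made explicit. The remaining work (descent to moduli, gluing of local slices) is formal once this analytic input is in hand.
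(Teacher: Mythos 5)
The statement you are proving is stated in the paper as a \emph{conjecture}: the paper gives no proof of it for general rank $r$, and the only place it is touched is inside the proof of Proposition 6.4, where the author observes that for $r=1$ the conjecture ``is trivial'' --- there both moduli spaces are explicitly the quotient of the fixed vector bundle $E=E^{1,0}\oplus E^{0,1}$ (respectively $V$) by the local system $V_{\Z}$, and the identification is linear algebra. So there is no proof in the paper to compare yours against; what you have written is an attempt at the general case, and it should be judged on its own.

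Your strategy --- fiberwise nonabelian Hodge correspondence, local slices, smooth dependence of the harmonic metric via the implicit function theorem, then descent --- is the natural and, I believe, correct route; I do not see a step that is wrong in principle. But as written it is a program rather than a proof, and the gap is exactly where you locate it yourself: Step 3 is asserted, not established. Two points there need real care. First, for the direction $M_{dR}\to M_{Hig}$ the equation you must linearize is Corlette's harmonicity equation $D_h^{*}\Psi_h=0$ for the metric on the flat bundle, not the Hitchin equation $F_{D_h}+[\theta_h,\theta_h^{*_h}]=0$ you quote (they are equivalent at the level of moduli, but the operators whose invertibility the implicit function theorem requires are different, and the identification of the kernel with the scalar line uses irreducibility in slightly different ways in the two settings). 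Second, and more importantly for the fiber-bundle statement, the coefficients of the relevant elliptic operator vary with the complex structure and K\"ahler form of $X_s$, so the uniform invertibility of the linearization on the scalar-complement, and the passage from Sobolev smoothness for every $k$ to genuine $C^\infty$ dependence of the Higgs data $(\bar\partial_h,\theta_h)$ on $(s,\sigma)$, must be carried out explicitly; this is standard elliptic theory but it is the entire content of the conjecture, and until it is written down the argument is incomplete. Everything in Steps 2 and 4 (slices, gauge-invariance, removal of the scalar ambiguity, gluing) is formal and fine once Step 3 is discharged.
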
 
Recall that over each fiber $Y_s$, Hitchin showed that the $L^2$ metric on the tangent spaces induced by the harmonic metrics (unique up to constant) is hyperk\"ahler. 
\begin{conjecture}\label{Hitchin metric glues}
There is a hermitian metric $\omega_H$ on $g$, unique up to adding the pullback of a closed two-form on $S$, which is $\theta_{KS}$-adapted, and its restriction to each fiber $Y_s$ is a Hitchin's metric.    
\end{conjecture}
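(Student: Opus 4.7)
The strategy is to first make the pointwise Hitchin construction canonical by fixing natural K\"ahler inputs on the fibers of $X/S$, then assemble the fiberwise K\"ahler forms into a global smooth closed $(1,1)$-form on $M^s_{Hig}(X/S,r)$ using the universal Higgs bundle, and finally verify $\theta_{KS}$-adaptedness.

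Since each fiber $X_s$ has genus $\geq 2$, it carries a unique K\"ahler-Einstein metric of constant negative curvature, and these vary smoothly (indeed real-analytically) in $s$; this gives a canonical hermitian structure on $T_{X/S}$. With this normalisation, Hitchin's gauge-theoretic $L^2$ construction selects a canonical K\"ahler form $\omega^I_s$ on the smooth locus of $g^{-1}(s)$, rather than merely a form up to rescaling. To globalise, I would use the universal Higgs bundle $(\sE,\Theta)$ on $N_{Hig}$ from Example \ref{nonabelian Kodaira-Spencer} to realise tangent vectors to $M^s_{Hig}(X/S,r)$ along $g$-fibers as hypercohomology classes of the relative deformation complex of $(\sE nd(\sE),\Theta^{end})$; $L^2$-pairings of harmonic representatives with respect to the K\"ahler-Einstein metric on the fibers of $X/S$ produce a smooth global section $\omega^{\mathrm{vert}}_H$ of the bundle of vertical $(1,1)$-forms on $g$ restricting on each $g$-fiber to $\omega^I_s$, with smoothness in $s$ a standard consequence of elliptic regularity in families.

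I would then extend $\omega^{\mathrm{vert}}_H$ to a closed two-form $\omega_H$ on the total space using a smooth horizontal lift on $g$, the natural choice being the horizontal distribution induced by the nonabelian Gauss-Manin connection transported across the homeomorphism $\alpha_S$ of Conjecture \ref{identification of differentiable structrues}. Closedness then reduces to a compatibility condition on the horizontal completion, which the Gauss-Manin transport should enforce. Uniqueness follows by observing that two closed completions of $\omega^{\mathrm{vert}}_H$ agree on every fiber of $g$, and so, being both closed, differ by a form pulled back from $S$ via a relative Poincar\'e lemma argument on a tubular neighbourhood of a fiber.

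The main obstacle is $\theta_{KS}$-adaptedness. By construction, $\theta_{KS}(\partial_s)$ is the vector field on $g^{-1}(s)$ obtained by applying the endomorphism Higgs operator to the Kodaira-Spencer class of $X/S$ at $s$, so it should correspond to the infinitesimal deformation of $M^s_{Hig}(X_s,r)$ induced by varying the complex structure of $X_s$. Because Hitchin's hyperk\"ahler structure is natural under diffeomorphisms of $X_s$ preserving the K\"ahler-Einstein form, any infinitesimal deformation tangent to Teichm\"uller space, represented by a harmonic Beltrami differential, should induce an infinitesimal isometry of the underlying $C^{\infty}$ moduli space, and its complexification ought to realise $\theta_{KS}(\partial_s)$ inside $\mathfrak{aut}(M^s_{Hig},\omega_{H,s})_{\C}$. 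The technical weight lies in rigorously identifying $\theta_{KS}$ values with vector fields arising from such variations: one must analyse how the harmonic metric on a stable Higgs bundle varies as the complex structure of the base curve varies and establish Donaldson-Corlette-type regularity in parameters. I expect this adaptedness step, rather than existence or uniqueness, to absorb most of the effort.
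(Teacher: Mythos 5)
The first thing to say is that this statement is a \emph{conjecture}: the paper offers no proof of it in general, so there is no ``paper's own proof'' for your plan to be measured against. The only place the paper touches it is the rank one case inside the proof of Proposition \ref{rank one moduli}, where the fiber of $g$ is the abelian variety quotient $E^{1,0}\times E^{0,1}/V_{\Z}$ and the metric is produced completely explicitly: one takes the Hodge metric $h$ on $E=E^{1,0}\oplus E^{0,1}$, forms $\omega=\sqrt{-1}\partial\bar\partial\sum h_{kl}e_k^*\bar e_l^*$, and observes that translation invariance of the $de_i^*$ lets $\omega$ descend to $M$. That construction is linear and does not suggest any mechanism for $r\geq 2$. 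Your proposal is therefore best read as a research program, and as such it has several concrete gaps rather than one.

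The most clearly broken step is uniqueness. You argue that two closed completions of $\omega_H^{\mathrm{vert}}$ agree on every fiber and hence differ by a pullback from $S$. This is false: a closed two-form on the total space whose restriction to every fiber vanishes need not be pulled back from $S$. Locally over $S$ one can add $ds\wedge\eta$ with $\eta$ a closed real one-form on the fiber; this is closed, kills no positivity, vanishes on every fiber, and is not a pullback unless $\eta=0$. The fibers here are far from simply connected (already for $r=1$ the fiber is $\C^g\times(\C^*)^{2g}$), so such $\eta$ exist in abundance; the relative Poincar\'e lemma controls only the fiber-degree-two component, not the mixed one. So either the uniqueness clause of the conjecture secretly relies on the remaining conditions ($\theta_{KS}$-adaptedness, or positivity of type $(1,1)$ on fibers) to kill the mixed deformations, or your argument needs an entirely different input; as written it does not prove the stated uniqueness. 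Beyond that: (i) closedness of your assembled $\omega_H$ is asserted, not proved --- the ``compatibility condition which the Gauss--Manin transport should enforce'' is exactly the hard analytic content, namely how Hitchin's hyperk\"ahler structure varies with the complex structure of $X_s$, and it also imports the unproven Conjecture \ref{identification of differentiable structrues}; (ii) for $\theta_{KS}$-adaptedness you must first establish that $\omega_H$ is auto-finite and then that $\mathrm{im}(\theta_{KS,s})$ lies in $\mathfrak{aut}(Y_s,\omega_{H,s})_{\C}$, i.e.\ that the specific holomorphic vector field $\tau(\rho_{KS}(\partial_s))$ on the moduli space decomposes into Killing fields of the Hitchin metric; your heuristic that Teichm\"uller deformations act by isometries does not address this, since $\theta_{KS}(\partial_s)$ is a vertical holomorphic vector field on a fixed fiber, not a variation of the fiber. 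You correctly flag this last step as the hardest, but nothing in the proposal reduces it to anything checkable.
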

We shall call the conjectural hermitian metric $\omega_H$ the Hitchin metric on $g$. The final conjecture is the following.
\begin{conjecture}\label{nonabelian Hodge moduli conjecture}
Assume the truth of Conjecture \ref{identification of differentiable structrues}, that is, one identifies the differentiable fiber bundle structures of $f$ and $g$ via $\alpha_S$. The resulting differentiable fiber bundle is denoted by $\alpha: Z\to S$. Assume also the existence of a Hitchin metric $\omega_H$ in Conjecture \ref{Hitchin metric glues}. Then it is a harmonic metric on $\alpha$ with respect to a suitably defined $\beta: T_{Z_{\R}}\cong T_{Z_{\R}}$, and therefore $(f,\nabla^{GM};g,\theta^{KS})$ becomes a nonlinear harmonic bundle over $S$.
\end{conjecture}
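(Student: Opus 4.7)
The plan is to use the hyperk\"ahler structure on the fibers of $g$ (equivalently of $f$) to produce a natural $\beta$, then to verify that the Hitchin metric $\omega_H$ is allowable with respect to this $\beta$, and finally to deduce the vanishing of the associated curvatures $F_{\omega_H}$ and $G_{\omega_H}$. The first two steps should reduce to classical hyperk\"ahler facts together with smooth dependence on $s\in S$; the third step should reduce fiberwise to the Corlette--Simpson theorem and globally to the grading identity $\mathrm{Gr}_{F_{hod}}(f,\nabla_{GM})=(g,\theta_{KS})$ of \cite{FS}.

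First I would define $\beta$ along vertical directions through the hyperk\"ahler operator $\frac{1}{\sqrt 2}(1+K_s)$, where $K_s=I_sJ_s$ and $I_s,J_s$ are the de Rham and Higgs complex structures on the fiber $Z_s$. A direct check gives $\beta I_s = J_s \beta$, so that $\beta$ carries $T_A|_{Z_s}$ isomorphically onto $T_B|_{Z_s}$. Extending by the identity on horizontal directions (with respect to any chosen $C^\infty$-splitting) and globalizing in $s$ using Conjecture \ref{identification of differentiable structrues} yields a candidate $\beta$ that induces the identity on $\alpha^*T_{S_\C}$. I would then verify that $\omega_H$ is preserved by $\beta$, which is a defining property of any Hermitian metric in the twistor sphere of a hyperk\"ahler structure; that the associated Chern connection is holomorphic along vertical directions, which should follow from fiberwise rigidity of the hyperk\"ahler data combined with smooth dependence on $s$; and that $\omega_H$ is $\theta_{KS}$-adapted, which should follow from the fact that the nonlinear Kodaira--Spencer class of Example \ref{nonabelian Kodaira-Spencer} represents an infinitesimal moduli deformation realized by infinitesimal holomorphic automorphisms of $Z_s$ that preserve $\omega_{H,s}$ modulo a finite-dimensional compact factor. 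The local lifting condition for $\beta$ would have to be verified in coordinates adapted to the twistor structure.

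For the vanishing of $F_{\omega_H}$ I would argue first fiberwise and then globally. The fiberwise restriction of the Simpson mechanism to $Z_s$ reduces, via the standard infinitesimal interpretation of the tangent spaces to $M^s_{Hig}(X_s,r)$, to the classical Simpson construction for the underlying harmonic bundle on the curve $X_s$. Hence the $(0,2)$ and $(2,0)$ components of $F_{\omega_H}$ vanish on each fiber by the Corlette--Simpson theorem. The remaining mixed-type curvature $F^{1,1}_{\omega_H}$ encodes the compatibility between the Gauss--Manin connection $\nabla_{GM}$ and the nonlinear Kodaira--Spencer field $\theta_{KS}$ under the Simpson mechanism. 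I would try to deduce its vanishing from the grading identity of \cite{FS} together with the $\G_m$-equivariance of the Hodge filtration on $M_{hod}\to \A^1\times S$: at $t=0$ this structure should produce precisely the identification between $\bar\partial_{\omega_H}+\bar\theta_{\omega_H}$ (from the de Rham side) and $\bar\partial_g$ (from the Higgs side) that the Simpson mechanism demands. The argument for $G_{\omega_H}=0$ is symmetric, and in fact follows automatically once $F_{\omega_H}=0$ thanks to the inverse-relationship between the two directions of the Simpson mechanism.

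The main obstacle will be the vanishing of $F^{1,1}_{\omega_H}$ (equivalently $G^{1,1}_{\omega_H}$). Unlike the other two curvature types, it is not a fiberwise statement and requires an infinitesimal nonabelian Hodge correspondence as $s\in S$ varies: one must control how the harmonic metrics on the Higgs bundles parametrized by $M^s_{Hig}$ vary with $s$ and match the resulting variation with the isomonodromy $\nabla_{GM}$. The technical crux is therefore a precise analytic understanding of the family $\{\omega_{H,s}\}_{s\in S}$ and its interaction with the Hodge filtration near $t=0$. A secondary technical issue is that $\beta$ is not canonical: the local lifting condition may force a refinement of the naive choice $\frac{1}{\sqrt 2}(1+K)$ by a correction depending on derivatives of the Hitchin metric in $s$.
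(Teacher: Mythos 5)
Your proposal targets the conjecture in full generality, but the paper itself does not prove it at that level: the statement is left as a conjecture, and the only proof supplied is Proposition \ref{rank one moduli}, which treats $r=1$. There the whole situation linearizes --- $M_{Hig}(X/S,1)$ is the quotient of the vector bundle $E=\gamma_*\Omega_{X/S}\oplus R^1\gamma_*\sO_X$ by the lattice $V_{\Z}$, the Hitchin metric is the descent of the Hodge metric of Example \ref{hermitian  metrics on vector bundles}, and the vanishing of $F_{\omega}$ is checked by pulling back to $E$ and invoking the classical harmonicity of the Hodge metric for the weight-one Kodaira--Spencer system. Your route (hyperk\"ahler rotation for $\beta$, fiberwise Corlette--Simpson for the $(2,0)$ and $(0,2)$ parts, the grading identity of \cite{FS} for the $(1,1)$ part) is genuinely different and far more ambitious, but as written it is a programme rather than a proof.

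The concrete gap is the one you yourself flag: the vanishing of $F^{1,1}_{\omega_H}$. This is the only part of the statement that is not fiberwise, and nothing in your sketch actually produces it. The grading identity $\mathrm{Gr}_{F_{hod}}(f,\nabla_{GM})=(g,\theta_{KS})$ is an identification of holomorphic data at the special fiber of the Hodge filtration, whereas the $(1,1)$ curvature involves the $\bar\partial_s$-derivatives of the family of fiberwise harmonic metrics $\{\omega_{H,s}\}$ --- precisely the analytic object whose existence is itself only Conjecture \ref{Hitchin metric glues}. Without a variational formula for $\omega_{H,s}$ in $s$ (an infinitesimal nonabelian Hodge correspondence in families), the identity of \cite{FS} cannot be converted into $F^{1,1}_{\omega_H}=0$. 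Secondary but real issues: allowability of $\omega_H$ is asserted, not verified; in particular $\theta_{KS}$-adaptedness requires $\mathrm{im}(\theta_{KS,s})$ to land in the complexification of the finite-dimensional isometry algebra $\mathfrak{aut}(Z_s,\omega_{H,s})$, a nontrivial finiteness statement for the Hitchin metric on a noncompact fiber. Your vertical formula for $\beta$ is at least algebraically consistent ($(1+K)I=I+J=J(1+K)$, so conjugation by $\tfrac{1}{\sqrt{2}}(1+K)$ carries $I$ to $J$), but the global requirements of Definition \ref{nonlinear harmonic bundle} --- reality, independence of the chosen horizontal splitting, and the local lifting condition --- are left unchecked.
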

In the following, we are going to prove the conjecture in the simplest case, viz. $r=1$. 
\begin{proposition}\label{rank one moduli}
Notations as above. Conjecture \ref{nonabelian Hodge moduli conjecture} holds for $r=1$.     
\end{proposition}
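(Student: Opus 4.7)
The plan is to establish the three ingredients of Conjecture \ref{nonabelian Hodge moduli conjecture} for $r=1$ by hand, exploiting the abelian nature of the rank-one theory. Let $\H:=R^1h_*\C$ be the local system of the family, equipped with its relative Hodge decomposition $\H\otimes_{\C}\sO_S=h_*\Omega_{X/S}\oplus R^1h_*\sO_X$ and with the polarization coming from Poincar\'e duality. In rank one, both moduli have explicit descriptions: fiberwise, $g$ is $\mathrm{Jac}(X_s)\times H^0(X_s,\Omega_{X_s})$, while $f=\mathrm{Hom}(\pi_1(X_s),\C^*)$ is a complex torus of the same complex dimension $2g$. Via the rank-one Simpson correspondence these share the same underlying real manifold, on which the two complex structures differ only by a Hodge-theoretic twist.

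Conjecture \ref{identification of differentiable structrues} follows from the explicit rank-one nonabelian Hodge theorem: a character $\chi:\pi_1(X_s)\to\C^*$ decomposes uniquely as $\chi=\chi_u\cdot\chi_r$ with $\chi_u$ unitary and $\chi_r$ taking values in $\R_{>0}$. The unitary part $\chi_u$ produces a flat unitary line bundle (a point of $\mathrm{Jac}(X_s)$), while $\log\chi_r\in H^1(X_s,\R)$ yields, via projection to $H^0(\Omega_{X_s})$, the Higgs field. These identifications are smooth (in fact real-analytic) in $s$, proving that $\alpha_S$ is a diffeomorphism. For Conjecture \ref{Hitchin metric glues}, the fiberwise Hitchin metric in rank one is the flat hyperk\"ahler metric induced by the $L^2$-pairing on $H^1(X_s,\R)$ coming from the Hodge polarization; since the polarization varies smoothly with $s$ through the Hodge metric on $\H$, one obtains $\omega_H$ globally. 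Because the fibers are abelian Lie groups with translation-invariant $\omega_H$, the Lie algebra $\mathfrak{aut}(X_s,\omega_s)_{\C}$ is all translation-invariant vector fields, so the $\theta_{KS}$-adaptedness condition is automatic.

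The decisive step is the construction of $\beta:T_{Z_\R}\to T_{Z_\R}$ and the verification that $\omega_H$ is a harmonic metric. Since the fibers of $Z\to S$ are abelian Lie groups, the relative tangent bundle $T_{Z_\R/S}$ is canonically identified with the pullback of the real Hodge bundle $\H_\R$; the two complex structures $T_A$ and $T_B$ then differ by an explicit real-linear operator on $\H_\R$ built from the Hodge filtration (essentially a Cayley/hyperk\"ahler rotation in the $I,J$-plane). I would take this operator on the vertical part, extended by the identity on the horizontal part, as $\beta$. The local lifting condition is automatic because the fibers are abelian and the two complex structures are horizontally compatible. The vanishing of the three curvature tensors of Proposition \ref{vanishing of F}, and of the analogous ones of Proposition \ref{vanishing of G}, will then reduce to three classical identities for the VHS $\H$: the integrability of the Hodge filtration (type $(0,2)$), Griffiths transversality combined with the K\"ahler identity for the Hodge metric (type $(1,1)$), and $[\rho_{KS},\rho_{KS}]=0$ (type $(2,0)$).

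The main obstacle I foresee is pinning down the correct $\beta$ so that the Simpson reconstruction formulas of Section 4 produce exactly $\nabla_{GM}$ from $\theta_{KS}$ and vice versa. This requires reconciling three separate pieces: the comparison of two complex structures on the same real torus bundle via the Hodge filtration, the Simpson $\beta$-twist between $\bar\partial_g+\bar\theta_\omega$ and $\bar\partial_\omega$, and the explicit realization of $\nabla_{GM}$ via isomonodromy. In the abelian (rank one) setting all of these are linear operations on $\H$, which makes the verification tractable in principle but demands careful bookkeeping of the two complex structures and the Hodge metric.
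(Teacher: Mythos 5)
Your overall strategy -- exploit the abelian/rank-one structure, describe both moduli explicitly as quotients of the Hodge bundle by the lattice $V_{\Z}$, take the fiberwise flat (Hodge) metric as the Hitchin metric, and reduce the harmonicity to classical weight-one Hodge theory -- is the same as the paper's. But as written the proposal has two genuine gaps, both at the points that carry the actual content of the proof.

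First, Conjecture \ref{nonabelian Hodge moduli conjecture} is a statement about the specific Higgs field $\theta_{KS}$ of Example \ref{nonabelian Kodaira-Spencer}, which is defined on the moduli space via the universal Higgs bundle, the endomorphism Higgs complex, and a Grothendieck spectral sequence. Your plan silently replaces this with the translation-invariant descent of the classical linear Kodaira--Spencer field on $E=\gamma_*\Omega_{X/S}\oplus R^1\gamma_*\sO_X$ (this is implicit when you claim the type $(2,0)$ curvature reduces to $[\rho_{KS},\rho_{KS}]=0$). That these two fields coincide is exactly what must be proved, and it is the bulk of the paper's argument: one writes $M\cong E^{1,0}\times E^{0,1}/V_{\Z}$, computes the universal Higgs bundle as $(\sP,0)\otimes(\sO_N,\Theta)$ with $\Theta$ the tautological relative one-form, and checks on \v{C}ech cocycles that the map $\tau\circ\rho_{KS}$ of Example \ref{nonabelian Kodaira-Spencer} agrees with the cup-product description $\eta\circ\rho$ after passing to the quotient. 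Without this identification you are proving harmonicity for a different (if expected-to-be-equal) object. Second, you explicitly flag the construction of $\beta$ and the verification of the curvature vanishings as an unresolved ``obstacle,'' so the decisive step is not actually carried out. The paper closes this by a cleaner observation than your proposed fiberwise Hodge-theoretic identities: the vanishing of $F_{\omega}$ is a local, translation-invariant condition on $M$, so it can be checked after pulling back along the covering $\id\times\pi\colon E\to M$, where it becomes literally the statement that the Hodge metric is a harmonic metric for the \emph{linear} weight-one Higgs bundle $(E,\theta^{KS})$ (with $\beta=\id$ in the linear normalization of Section 4), which is classical. If you want to salvage your route, you must (i) supply the identification $\theta_{KS}=\theta^{KS}$ and (ii) either specify $\beta$ precisely and verify the three curvature identities, or adopt the local pullback-to-$E$ reduction.
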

\begin{proof}
Let $\gamma: X\to S$ be the family. Attached to $\gamma$, we have the weight one $\Z$-VHS $V_{\Z}=R^1\gamma_*\Z_X$. Set $V=V_{\Z}\otimes_{\Z}\sO_S$. Then $(V,F_{hod},\nabla^{GM})$ is the degree one Gauss-Manin system over $S$ and $(E,\theta^{KS})=Gr_{F_{hod}}(V,\nabla^{GM})$ is the associated Kodaira-Spencer system over $S$. Let us describe $\nabla_{GM}$ on $f: M_{dR}(X/S,1)\to S$ as well as $\theta_{KS}$ on $g: M_{Hig}(X/S,1)\to S$ explicitly.

Via Riemann-Hilbert correspondence, $(f,\nabla_{GM})$ is isomorphic to $$(h: M_{Betti}(X/S,1)\to S,\nabla_{GM})$$ as holomorphic connection, where $\nabla_{GM}$ over $h$ comes from viewing $\gamma$ as a differentiable fiber bundle (viz. locally over $S$ as a product of differentiable manifolds). The holomorphic fibration $h$ is nothing but $V/V_{\Z}\to S$, a holomorphic $(\C^*)^{b_1}$-bundle over $S$, where $b_1$ is the first Betti number of $X_s$. It is also clear that $\nabla_{GM}$ is induced from $\nabla^{GM}$: The problem is local. We may assume $\gamma: X\to S$ to be a product as differentiable fiber bundle. Let $0\in S$ be a base point. Then $V_{\Z}\cong H^1(X_0,\Z)\times S$ and $V\to S$ is isomorphic to the trivial bundle with fiber $H^1(X_0,\C)$. $\nabla^{GM}$ is just the constant connection. Any holomorphic cross section of $V/V_{\Z}\to S$ lifts to a holomorphic section of $V\to S$ via the natural projection $V\to V/V_{\Z}$. Two liftings differ by an element of $H^1(X_0,\Z)$, viewed as a constant section of $V_{\Z}$ over $S$ which has value zero under $\nabla^{GM}$. So the value of $\nabla^{GM}$ on liftings of a holomorphic cross section is the same. The descending connection on $V/V_{\Z}\to S$ is $\nabla_{GM}$. 

Next, we write $E=E^{1,0}\oplus E^{0,1}$. By the $E_1$-degeneration of the Hodge to de Rham spectral sequence on the relative holomorphic de Rham complex, we know
$$
E^{1,0}=\gamma_*\Omega_{X/S},\quad E^{0,1}=R^1\gamma_*\sO_{X}.
$$
And we also know that the graded Higgs field 
$$
\theta^{KS}: T_S\to \Hom(E^{1,0}, E^{0,1})
$$
is just the composite of the Kodaira-Spencer map $\rho: T_S\to R^1\gamma_*T_{X/S}$ and the natural map
$$
\eta: R^1\gamma_*T_{X/S}\to \Hom(\gamma_*\Omega_{X/S}, R^1\gamma_*\sO_{X})
$$
coming from cup product and the natural pairing $T_{X/S}\otimes \Omega_{X/S}\stackrel{(\ \cdot \ )}{\longrightarrow} \sO_X$. We know that $M:=M_{Higgs}(X/S,1)$ (as $S$-variety) is naturally identified with
$$
T^*_{Pic^0(X/S)}\cong \gamma_*\Omega_{X/S}\times Pic^0(X/S)\cong \gamma_*\Omega_{X/S}\times R^1\gamma_*\sO_{X}/V_{\Z}.
$$
Here the inclusion $V_{\Z}\subset R^1\gamma_*\sO_{X}$ is the composite map
$V_{\Z}\to V\to E^{0,1}$. So we have a commutative diagram of holomorphic fibrations over $S$
\[
		\xymatrix{ E=E^{1,0}\times E^{0,1}\ar[dr]_h\ar[rr]^{id\times \pi}  & & E^{1,0}\times \frac{E^{0,1}}{V_{\Z}}=M \ar[dl]^{g} \\
			  & S. & }
		\]
The associated Higgs field to $\theta^{KS}$ to $h$ (see Example \ref{complex conjugation for vector bundles}) is invariant under translation by $E$ (in particular by $V_{\Z}\subset E$), and therefore descends to a Higgs field on $g$, which is again denoted by $\theta^{KS}$. Claim that $\theta^{KS}=\theta_{KS}$ as defined in Example \ref{nonabelian Kodaira-Spencer}. Set $N_1=X\times_SPic^0(X/S)$, $N_2=E^{1,0}$, and $N=X\times_SM$. Then we have the following Cartesian diagram
\[
		\xymatrix{ N\ar[d]_{p_1}\ar[r]^{p_2}  & N_2\ar[d]^{\pi_1} \\
			 N_1\ar[r]^{\pi_2} & S.  }
		\]
Let $\sP_{X/S}$ be the Poincar\'e line bundle over $N_1$ and $t\in H^0(N_2,\pi_1^*E^{1,0})$ the tautological section (the value of $t$ at $n\in N_2$ is $(n,n)\in N_2\times_S N_2=\pi_1^*E^{1,0}$). Then the universal Higgs bundle $(\sE,\Theta)$ over $N$ is the tensor product of $(\sP:=p_1^*\sP_{X/S},0)$ and $(\sO_N,\Theta)$, where $\Theta\in H^0(N,\Omega_{N/M})$ is the one obtained by sending $t$ under the composite of a sequence of natural maps
$$
H^0(N_2,\pi_1^*E^{1,0})=H^0(N_2,\pi_1^*\gamma_*\Omega_{X/S})\to H^0(N_2,\pi_1^*\gamma_*q_*q^*\Omega_{X/S}),
$$
where $q: N_1\to X$ is the natural projection satisfying $\pi_2=\gamma\circ q, g'=q\circ p_1$, and 
$$
H^0(N_2,\pi_1^*\gamma_*q_*q^*\Omega_{X/S})= H^0(N_2,\pi_1^*\pi_{2*}q^*\Omega_{X/S})\to H^0(N_2,p_{2*}p_1^*q^*\Omega_{X/S}),
$$
and 
$$
H^0(N_2,p_{2*}p_1^*q^*\Omega_{X/S})=H^0(N_2,p_{2*}g^{'*}\Omega_{X/S})= H^0(N_2,p_{2*}\Omega_{N/M})=H^0(N, \Omega_{N/M}).
$$
According to the above description, the Higgs bundle $(\sE nd(\sE),\Theta^{end})$ is simply given by
$$
\Theta: \sO_N\to \sO_N\otimes \Omega_{N/M}=\Omega_{N/M},
$$
and therefore the associated Higgs complex $\Omega_{Hig}(\sO_N,\Theta)$ reads
$$
\sO_N\stackrel{\Theta}{\to} \Omega_{N/M}.
$$
The verification of the claim boils down to showing $\tau: R^1\gamma_*T_{X/S}\to g_*T_{M/S}$ coincides with $\eta$ under the natural quotient map $\pi$. It is a local problem. So we take an open subset $V\subset S$ and an open subset $U=U_1\times U_0\subset g^{-1}(V)\subset M$, so that $\pi: \pi^{-1}U_0 \cong U_0$, and $\tau$ is given by 
$$
H^1(X_V, T_{X_V/V})\stackrel{g^{'*}}{\to} H^1(N_U,T_{N_U/U})\to \H^1(N_U, \Omega^*_{Hig}(\sO_{N_U},\Theta_U))\cong H^0(U,T_{U/V}),
$$
where $X_V=\gamma^{-1}V$, $N_U=\gamma^{'-1}(U)$ and $(\sO_{N_U},\Theta_U)$ is the restriction of $(\sO_N,\Theta)$ to $N_{U}$. Take an open covering $\sU=\{U_{\alpha}\}$ of $X_V$, so that $\{W_{\alpha}:=U_{\alpha}\times_{V}U\}$ forms an open covering $\sW$ of $N_U$. Elements of $\H^1(N_U, \Omega^*_{Hig}(\sO_{N_U},\Theta_U))$ are represented by cech cocycles in $\sC^0(\sW,\Omega_{N_U/U})\oplus \sC^1(\sW,\sO_{N_U})$. Represent an element of $H^1(X_V, T_{X_V/V})$ by $\nu_{\alpha_0\alpha_1}\in \sC^1(\sU,T_{X_V/V})$. Then the action of $g^{'*}\nu_{\alpha_0\alpha_1}\in \sC^1(\sW,T_{N_U/U})$ on the first summand $\gamma^0_{\alpha_0}\in \sC^0(\sW,\Omega_{N_U/U})$ is given by $(g^{'*}\nu_{\alpha_0\alpha_1},\gamma^0_{\alpha_0}|_{W_{\alpha_0\alpha_1}})$, and on the second summand $\gamma^1_{\alpha_0\alpha_1}\in \sC^1(\sW,\sO_{N_U})$ is given by $\gamma^1_{\alpha_0\alpha_1}\cdot(g^{'*}\nu_{\alpha_0\alpha_1},\Theta|_{W_{\alpha_0\alpha_1}})$. As $\Theta$ is a global section of $\Omega_{N/M}$, the element $\gamma^1_{\alpha_0\alpha_1}\cdot(g^{'*}\nu_{\alpha_0\alpha_1},\Theta|_{W_{\alpha_0\alpha_1}})$ is a coboundary. Therefore, the action of $H^1(X_V, T_{X_V/V})$ on 
$$
H^0(U,T_{U/V})\cong H^0(U_1\times \pi^{-1}U_0,T_{U_1\times \pi^{-1}U_0/V})\cong h_1^*E^{1,0}|_{U_1}\boxplus h_0^*E^{0,1}|_{U_0},  
$$
where $h_i: E^{i,1-i}\to S$ is is the natural map, coincides with that of $\theta^{KS}$. 

Third, Conjecture \ref{identification of differentiable structrues} in this case is trivial. Let $h_{hod}$ be the Hodge metric on $E$. By Example \ref{hermitian  metrics on vector bundles}, we obtain a hermitian metric $\omega$ on $h: E\to S$. As $\{d e^*_i\}$s are translation-invariant, $\omega$ descends to a hermitian metric on $g: M\to S$ (denoted again by $\omega$), which is easily seen to be a Hitchin metric. 

Finally, by observing that the verification of the vanishing of $F_{\omega}$ is local on $M$, and hence it can be done on $E$ by choosing a local isomorphism as above. The pullback of $F_{\omega}$ to $E$ is nothing but the curvature associated to $h$ over $(E,\theta^{KS})$, which vanishes because $h$ is harmonic for it. 

\end{proof}

\section{A Torelli theorem}
To conclude this note, we would like to apply our preliminary theory to the theory of abelian varieties, and show that we do gain some new information from such considerations. Let $A_0$ be a complex abelian variety. The Torelli theorem for abelian varieties says that $A_0$ up to isomorphism is determined by its associated weight one $\Z$-PHS on $V_{\Z}=H^1(A_0,\Z)$. By Dolbeault isomorphism, one obtains from $V_{\Z}$ the purely algebraically defined cohomology $H^{0}(A_0,\Omega_{A_0})\oplus H^1(A_0,\sO_{A_0})$. The process is non reversible. However, when $A_0$ is put into a family which has large monodromy action on $H^1$, then the associated Kodaria-Spencer system with fiber at $0$ the above Dolbeault cohomologies together with an action of Higgs field will tell much more information about the Hodge structure on $V_{\Z}\otimes \R$. But again, one cannot completely determine $V_{\Z}$.   
\begin{theorem}
Let $S$ be a projective manifold of positive dimension. Let $\sA^i\to S, i=1,2$ be two polarized families of abelian varieties over $S$ whose associated monodromy representations on $H^1$ are Zariski dense. Then these two polarized families are isomorphic to each other if and only if the associated nonlinear Higgs bundles $ (M_{Hig}(\sA^i/S,1)\to S,\theta_{KS})$ over $S$ are isomorphic to each other.
\end{theorem}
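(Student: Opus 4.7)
The forward direction is immediate from the functoriality of the constructions $M_{Hig}(\cdot/S,1)$ and $\theta_{KS}$. For the converse, let $\phi\colon (M^{1},\theta_{KS}^{1})\to (M^{2},\theta_{KS}^{2})$ be an isomorphism of nonlinear Higgs bundles, where $M^{i}:=M_{Hig}(\sA^{i}/S,1)$ and $\gamma^{i}\colon\sA^{i}\to S$ are the two families. My plan is to extract from $\phi$ an isomorphism of the underlying polarized $\Z$-variations of Hodge structure $V^{i}_{\Z}:=R^{1}\gamma^{i}_{*}\Z_{\sA^{i}}$ and then to invoke the Torelli theorem for families of polarized abelian varieties.

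By Proposition \ref{rank one moduli}, $M^{i}_{s}\cong E^{i,1,0}_{s}\times A^{i}_{s}$, with $A^{i}_{s}$ a compact abelian variety and $E^{i,1,0}_{s}$ a complex vector space. Any biholomorphism between two such products is forced, by compactness of the $A^{i}_{s}$ and discreteness of $\Hom(A^{1}_{s},A^{2}_{s})$, to take the form
\[
\phi_{s}(v,a)=(L_{s}(v),\ f_{s}(a)+t_{s}(v)),
\]
with $L_{s}\colon E^{1,1,0}_{s}\to E^{2,1,0}_{s}$ a $\C$-affine isomorphism, $f_{s}\colon A^{1}_{s}\to A^{2}_{s}$ an isomorphism of abelian varieties (up to translation), and $t_{s}\colon E^{1,1,0}_{s}\to A^{2}_{s}$ holomorphic. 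After composing $\phi$ with a translation by a section of the abelian group scheme $M^{2}/S$, I may assume $\phi$ preserves the zero section, making $L_{s}$ linear, $f_{s}$ a group homomorphism, and $t_{s}(0)=0$. Preservation of the Higgs field, using the explicit description of $\theta_{KS}^{i}$ in the proof of Proposition \ref{rank one moduli}, unwinds to the algebraic identity
\[
df_{s}\circ\theta^{KS,1}(\partial_{s})=\theta^{KS,2}(\partial_{s})\circ L_{s}\qquad\forall\, \partial_{s}\in T_{S,s},
\]
which precisely says that $(L,df)$ defines a global isomorphism of linear Higgs bundles $(E^{1},\theta^{KS,1})\cong(E^{2},\theta^{KS,2})$ on $S$.

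Separately, since $\phi$ is a homeomorphism, it induces an iso of $\pi_{1}$ of fibers; via $\pi_{1}(M^{i}_{s})=\pi_{1}(A^{i}_{s})=V^{i}_{\Z,s}$ this yields an iso of $\Z$-local systems $\psi\colon V^{1}_{\Z}\cong V^{2}_{\Z}$, coinciding with the restriction of $df_{s}$ to the lattice $V^{i}_{\Z,s}\subset E^{i,0,1}_{s}$. By Simpson's correspondence, $(L,df)$ promotes to a flat iso of $\C$-VHS $(V^{1}_{\C},F^{1},\nabla^{GM}_{1})\cong(V^{2}_{\C},F^{2},\nabla^{GM}_{2})$ whose $\C$-linearization agrees with $\psi\otimes_{\Z}\C$. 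The Zariski density hypothesis ensures that the monodromy-invariant alternating forms on $V^{i}_{\Q}$ are one-dimensional, so $\psi$ matches the two polarizations up to a rational scalar which integrality and $\Z$-linearity of $\psi$ force to be $\pm 1$. Hence $\psi$ is an iso of polarized $\Z$-VHS, and the Torelli theorem for families of polarized abelian varieties delivers an iso $\sA^{1}\cong\sA^{2}$.

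The principal technical obstacle is the coherence step: showing the $\C$-linearization of the topological iso $\psi$ agrees with the algebraic iso $(L,df)$ coming from nonabelian Hodge. Both originate from the single $\phi$, but making the comparison explicit requires careful tracking of how the lattice $V^{i}_{\Z,s}$ sits inside $V^{i}_{s}=E^{i,1,0}_{s}\oplus E^{i,0,1}_{s}$ through the rank one uniformization $M^{i}=E^{i,1,0}\times E^{i,0,1}/V^{i}_{\Z}$ of Proposition \ref{rank one moduli}. A secondary subtlety is that the translation factor $t_{s}$ in $\phi_{s}$ is unconstrained by Higgs field preservation; it must be shown to contribute trivially on Lie algebras and on $\pi_{1}$, so that it drops out of the reconstruction of the $\Z$-VHS.
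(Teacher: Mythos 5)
Your overall strategy---reduce to an isomorphism of weight-one polarized $\Z$-VHS and quote Torelli, with Zariski density forcing the polarization to match up to sign---parallels the paper's, and your endgame (one-dimensionality of the space of monodromy-invariant alternating forms, hence $\pm 1$) is exactly the paper's. But the route you take to the $\Z$-VHS isomorphism has two genuine gaps. First, the structural claim about $\phi_s$: compactness of $A^i_s$ and discreteness of $\Hom(A^1_s,A^2_s)$ do give the product form $(v,a)\mapsto (L_s(v), f(a)+t_s(v))$, but they give no control whatsoever on $L_s$, which a priori is an arbitrary biholomorphism of $E^{1,1,0}_s\cong \C^g$; for $g\ge 2$ such automorphisms need not be affine (the paper's Example \ref{representations of fundamental groups} dwells on exactly this phenomenon). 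You would need to extract linearity of $L_s$ from the Higgs-field compatibility $df\circ\theta^1_s=\theta^2_s\circ L_s$, which only works when $\theta^2_s$ is injective on $E^{2,1,0}_s$---not something the hypotheses guarantee fiber by fiber. Without a linear $L$ you do not obtain an isomorphism of \emph{linear} Higgs bundles, and the appeal to Simpson's correspondence does not start.

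Second, and more seriously, the step you yourself flag as ``the principal technical obstacle'' is not a technicality---it is the entire content of the theorem, and you have not supplied it. Your lattice isomorphism $\psi$ is recovered from $\pi_1$ of the fiber of the \emph{Higgs} moduli space, so it lives as an isomorphism of lattices inside the $E^{i,0,1}_s$ only; the weight-one Hodge structure is the datum of $V^i_{\Z,s}$ inside the full $V^i_{\C,s}=E^{i,1,0}_s\oplus E^{i,0,1}_s$, and the projection $V_\Z\to E^{0,1}$ forgets precisely the period information that makes the Dolbeault-to-Betti passage ``non reversible'' (as the paper stresses in the paragraph preceding the theorem). Asserting that $\psi\otimes\C$ ``agrees with'' the flat $\C$-VHS isomorphism produced by Simpson's correspondence is therefore not a bookkeeping check: it is the assertion that the nonlinear Higgs datum determines the integral structure, which is what is to be proved. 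The paper closes this gap by a different route: using Proposition \ref{rank one moduli} (harmonicity of the Hitchin metric in rank one), it transports the isomorphism of nonlinear Higgs bundles to an isomorphism of the Betti moduli spaces together with their Gauss--Manin connections, and then recovers $\V^i_\Z$ as $\ker(Lie(M_{Betti})\to M_{Betti})$---a lattice sitting inside the full flat bundle $\V^i_\C$, so that compatibility with the $\C$-VHS isomorphism is built in rather than matched after the fact. Unless you either carry out your coherence step explicitly or reroute through the flat side as the paper does, the proof is incomplete.
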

\begin{proof}
It suffices to prove the if-direction. A polarized family of abelian varieties over $S$ is uniquely determined up to isomorphism by its associated weight one $\Z$-PVHS over $S$. Fix a base point $0\in S$. Let $\rho^i: \pi_1(S,0)\to \Aut(H^1(\sA^i_0,\Z),\omega^i_0)$ be the associated monodromy representation, where $\omega^i_0$ denotes for the symplectic form coming from polarization $V_{\Z}\times V_{\Z}\to \Z$. The condition on Zariski density says that the Zariski closure of $\rho^i_{\C}$ in the algebraic group $\Aut(H^1(\sA^i_0,\C),\omega^i_0)$ is the whole group. Let $\V^i_{\Z}$ be the corresponding $\Z$-local system to $\rho^i$. By the Simpson correspondence, the corresponding Higgs bundles $(E^i,\theta^{KS})$ over $S$ are stable (with respect to any ample line bundle over $S$). Moreover, $\wedge^2\rho^i_{\C}\cong \wedge^2_0\rho^i_{\C}\oplus \C\{\omega_0^i\}$ decomposes into two irreducibles. Set $\V^i_{\C}=\V^i_{\Z}\otimes \C$ and $V^i=\V^i_{\Z}\otimes \sO_S$. Then the nonabelian Hodge theory tells us that $(E^i,\theta^{KS})$ determines $\V^i_{\C}$ as complex variation of Hodge structure. As $(M_{Hig}(\sA^1/S,1),\theta_{KS})\cong (M_{Hig}(\sA^2/S,1),\theta_{KS})$, it follows from the proof of Proposition \ref{rank one moduli} that 
$$(M_{Betti}(\sA^1/S,1)\to S,\nabla_{GM})\cong (M_{Betti}(\sA^2/S,1)\to S,\nabla_{GM}).$$ Also, the proof explains that $M_{Betti}(\sA^i/S,1)\to S$ is isomorphic to the quotient $V^i/\V^i_{\Z}\to S$, so that we obtain that as local systems over $S$,
$$
\V^i_{\Z}=\ker(Lie(M_{Betti}(\sA^1/S,1))\to M_{Betti}(\sA^1/S,1)).
$$
Hence we see that in the isomorphism $\varphi: \V^1_{\C}\to \V^2_{\C}$ of $\C$-VHS, it maps $\V^1_{\Z}$ to $\V^2_{\Z}$. In other words, we have an isomorphism of weight one $\Z$-VHS $\V^1_{\Z}\to \V^2_{\Z}$. But since the $\pi_1(S,0)$-invariant $\C$-subspace in $\wedge^2\V^i_{\C}$ is one-dimenionsal, it follows that the $\pi_1(S,0)$-invariant $\Z$-sublattice in $\wedge^2\V^i_{\Z}$ is of rank one, and hence the induced isomorphism $\wedge^2\V^1_{\Z}\to \wedge^2\V^2_{\Z}$ of $\Z$-local systems must map $\Z\{\omega^1_0\}$ to $\Z\{\omega^2_0\}$, or equivalently map $\omega_0^1$ to $\pm\omega^2_0$. Therefore, either $\varphi$ or $-\varphi$ induces an isomorphism $\V^1_{\Z} \to \V^2_{\Z}$ of weight one $\Z$-PVHS. This concludes the proof. 

\end{proof}

\end{document}